\newtheorem{theorem}{Theorem}[section]
\newtheorem{proposition}{Proposition}[section]
\newtheorem{lemma}{Lemma}[section]
\newtheorem{corollary}{Corollary}[section]
\newtheorem{remark}{Remark}[section]
\numberwithin{equation}{section}
\title[Borg-Levinson results for Robin Laplacian with unbounded potential]{Multidimensional Borg-Levinson uniqueness and stability results for the Robin Laplacian with unbounded potential}
\author[Mourad Choulli]{Mourad Choulli}
\address{Universit\'e de Lorraine}
\email{mourad.choulli@univ-lorraine.fr}
\author[Abdelmalek Metidji]{Abdelmalek Metidji}
\address{Aix Marseille Univ, Universit\'e de Toulon, CNRS, CPT, Marseille, France}
\email{abdelmalek.metidji1@gmail.com}
\author[\'Eric Soccorsi]{\'Eric Soccorsi}
\address{Aix Marseille Univ, Universit\'e de Toulon, CNRS, CPT, Marseille, France}
\email{eric.soccorsi@univ-amu.fr}
\thanks{M. C. and \'E. S. are supported by the Agence Nationale de la Recherche (ANR) under grant ANR-17-CE40-0029 (projet MultiOnde).}
\date{\today}
\subjclass[2010]{35R30, 35J10.} 
\keywords{Boundary spectral data, Robin boundary condition, Borg-Levinson type theorem, Dirichlet-to-Neumann map}
\begin{document}

\begin{abstract}
This article deals with the uniqueness and stability issues in the inverse problem of determining the unbounded potential of 
the Schr\"odinger operator in a bounded domain of $\mathbb{R}^n$, $n \geq 3$, endowed with Robin boundary condition,
from knowledge of its boundary spectral data.
These data are defined by the pairs formed by the eigenvalues and either partial or full Dirichlet measurement of the eigenfunctions on the boundary of the domain.
\end{abstract}

\maketitle


\section{Introduction}

In the present article, $\Omega$ is a $C^{1,1}$ bounded domain of $\mathbb{R}^n$, $n\ge 3$, with boundary $\Gamma$, and we equip the two spaces $H:=L^2(\Omega)$ and $V:=H^1(\Omega)$ with their usual scalar product.
Put $p:=2n/(n+2)$ and let $p^\ast:=2n/(n-2)$ be its conjugate number, in such a way that $V$ is continuously embedded in $L^{p^\ast}(\Omega)$.

To simplify notations, we denote throughout this text by $\langle \cdot,\cdot \rangle$ the duality pairing between an arbitrary Banach space and its dual.


\subsection{The Robin Laplacian}
\label{sec-RL}

Let $s \in (n-1,\infty)$. For $\alpha \in L^s (\Gamma,\mathbb{R})$ and $q\in L^{n/2} (\Omega,\mathbb{R})$, we consider the sesquilinear form $\mathfrak{a} : V\times V\rightarrow \mathbb{C}$, defined by
\[
\mathfrak{a}(u,v):=\int_\Omega \nabla u\cdot \nabla \overline{v}dx+\int_\Omega qu\overline{v}dx+\mathfrak{a}_0(u,v),\quad u,v\in V,
\]
where
\[
\mathfrak{a}_0(u,v):=\int_\Gamma \alpha u\overline{v}ds(x),\quad u,v\in V.
\]
It is proved in Appendix \ref{appendixA} that $\mathfrak{a}_0$ is continuous. On the other hand, we know from \cite[Lemma 1.1]{Ch2019} that
\[
\left| \int_\Omega qu\overline{v}dx\right|\le c_\Omega \|q\|_{L^{n/2} (\Omega)}\|u\|_V\|v\|_V,
\]
where $c_\Omega >0$ is a constant depending only on $\Omega$. In consequence, $\mathfrak{a}$ is continuous. 


Throughout the entire text, we assume that $\alpha \ge -\mathfrak{c}$ for some constant $\mathfrak{c} \in (0, \mathfrak{n}^{-2})$ almost everywhere on $\Gamma$, where $\mathfrak{n}$ denotes the norm of the (bounded) trace operator $u\in V\mapsto u_{|\Gamma}\in L^2(\Gamma)$.
Set
\[
\mathrm{Q}(\rho,\aleph):=\{q\in L^\rho(\Omega,\mathbb{R});\; \|q\|_{L^\rho(\Omega)}\le \aleph\},\quad \rho \ge n/2,\; \aleph >0.
\]
Then, arguing as in the derivation of \cite[Lemma A2]{Po}, we obtain that
\begin{equation}\label{ii1}
\|qu^2\|_{L^1(\Omega)}\le \epsilon \|u\|_V^2+C_\epsilon\|u\|_H^2,\quad q\in \mathrm{Q}(n/2,\aleph),\; u\in V,\; \epsilon >0,
\end{equation}
for some constant $C_\epsilon>0$ depending only of $n$, $\Omega$, $\aleph$ and $\epsilon$.
Further, we get by applying \eqref{ii1} with $\epsilon =\kappa:=(1-\mathfrak{c}\mathfrak{n}^2)/2$ that
\begin{equation}\label{co}
\mathfrak{a}(u,u)+ \lambda ^\ast\|u\|_H^2\ge \kappa \|u\|_V^2,\quad u\in V,
\end{equation}
where $\lambda^\ast>0$ is a constant which depends only on $n$, $\Omega$, $\mathfrak{c}$ and $\aleph$.

Then the bounded operator $A:V\rightarrow V^\ast$ defined by
\[
\langle Au,v\rangle=\mathfrak{a}(u,v),\ u,v\in V,
\]
is self-adjoint and coercive according to \eqref{co}. 


\subsection{Boundary spectral data} 
\label{sec-BSD}
With reference to \cite[Theorem 2.37]{Mc}, the spectrum of $A$ consists of its eigenvalues 
$\lambda_k$, $k \in \mathbb{N}:=\{1,2,\ldots \}$, arranged in non-decreasing order and repeated with the (finite) multiplicity,
\[
-\infty <\lambda_1\le \lambda_2\le \ldots \le \lambda_k\le \ldots, \quad \mbox{and\ such\ that}\quad \lim_{k \to \infty}\lambda_k \rightarrow \infty.
\]
Moreover, there exists an orthonormal basis $\{ \phi_k,\ k \in \mathbb{N} \}$ of $H$, made of eigenfunctions $\phi_k\in V$ of $A$, satisfying 
\[
\mathfrak{a}(\phi_k,v)=\lambda_k(\phi_k,v),\quad v\in V,\quad k \in \mathbb{N},
\]
where $(\cdot,\cdot)$ is the usual scalar product in $H$. For the sake of shortness, we write
\[
\psi_k:={\phi_k}_{|\Gamma},\quad k \in \mathbb{N}.
\]

Recall that for $u\in V$, we have $\Delta u\in H^{-1}(\Omega)$, the space dual to $H_0^1(\Omega)$, but that it is not guaranteed that $\Delta u$ lie in $V^\ast$ (which is strictly embedded in $H^{-1}(\Omega)$). Thus, we introduce
\[
W:=\{u\in V; \Delta u\in V^\ast\}.
\] 
Endowed with its natural norm
\[
\|u\|_{W}=\|u\|_V+\|\Delta u\|_{V^\ast},\quad u\in W,
\]
is a Banach space. 
Next, for $\varphi \in H^{1/2}(\Gamma)$, we set
\[
\dot{\varphi}:=\{v\in V;\; v_{|\Gamma}=\varphi\},
\]
and we equip the space $H^{1/2}(\Gamma)$ with its graph norm
\[
\|\varphi\|_{H^{1/2}(\Gamma)}=\min\{\|v\|_V;\; v\in \dot{\varphi}\}.
\]
Now, for $u\in W$ fixed, we put
\[
\Phi_u (v):=\langle \Delta u , v\rangle+(\nabla u,\nabla v),\quad v\in V,
\]
apply the Cauchy-Schwarz inequality, and get that
\begin{equation}\label{0.1}
|\Phi_u (v)|\le \|\Delta u\|_{V^\ast}\|v\|_V+\|u\|_V\|v\|_V \leq \|u\|_W\|v\|_V.
\end{equation}
Moreover, since $C_0^\infty (\Omega)$ is dense in $H_0^1(\Omega)$, it is easy to see that $H_0^1(\Omega)\subset \ker \Phi_u$ and consequently that $\Phi_u (v)$ depends only on $v_{|\Gamma}$. This enables us to define the normal derivative of $u$, denoted by $\partial_\nu u$, as the unique vector in $H^{-1/2}(\Gamma)$ satisfying
\[
\langle \partial_\nu u , \varphi\rangle =\Phi_u (v),\quad v \in \dot{\varphi}\hskip.2cm \mbox{is arbitrary}.
\] 
As a consequence we have 
\[
\|\partial_\nu u\|_{H^{-1/2}(\Gamma)}\le \|u\|_W,
\]
by \eqref{0.1}, and the following generalized Green formula:
\begin{equation}
\label{ggf}
\langle\Delta u , v\rangle+(\nabla u,\nabla v)=\langle \partial_\nu u , v_{|\Gamma}\rangle,\quad u\in W,\; v\in V.
\end{equation}

Pick $f\in V^\ast$ and $\mu \in \mathbb{C}$, and let $u\in V$ satisfy
\begin{equation}\label{vf}
\mathfrak{a}(u,v)+\mu(u,v)=\langle f , v\rangle ,\quad v\in V.
\end{equation}
Using that $C_0^\infty(\Omega) \subset V$, we obtain that
\[
\int_\Omega \nabla u\cdot \nabla \overline{v}dx +\int_\Omega qu\overline{v}dx+\mu\int_\Omega u\overline{v}dx=\langle f, v \rangle,\quad v\in C_0^\infty (\Omega),
\]
which yields $-\Delta u+qu+\mu u=f$ in $\mathscr{D}'(\Omega)$. Thus, bearing in mind that $qu \in V^\ast$, we have $u\in W$, and the generalized Green formula \eqref{ggf} provides
\[
\langle \partial_\nu u+\alpha u_{|\Gamma} , v_{|\Gamma}\rangle =0,\quad v\in V.
\]
Since $v\in V\mapsto v_{|\Gamma}\in H^{1/2}(\Gamma)$ is surjective, the above line reads $\partial_\nu u+\alpha u_{|\Gamma}=0$ in $H^{-1 \slash 2}(\Gamma)$, showing that \eqref{vf} is the variational formulation of the following boundary value problem (BVP):
\[
(-\Delta +q+\mu)u=f\; \mathrm{in}\; \Omega,\quad \partial_\nu u+\alpha u_{|\Gamma}=0\; \mathrm{on}\; \Gamma.
\]
As we notice from \eqref{A1} with $v=1$ that $\alpha u\in L^1(\Gamma)$ satisfies the estimate
\[
\|\alpha u\|_{L^1(\Gamma)}\le \mathbf{c}_0\|\alpha \|_{L^s(\Gamma)}\|u\|_V,
\]
for some positive constant $\mathbf{c}_0$ depending only on $s$ and $\Omega$, we see that the Robin boundary condition $\partial_\nu u+\alpha u_{|\Gamma}=0$ holds in $L^1(\Gamma)$, and hence a.e. on $\Gamma$.

Further, taking $f=0$ and $\mu=\lambda_k$ for all $k \in \mathbb{N}$, we find that $\phi_k\in W$ satisfies
\begin{equation}\label{ee}
(-\Delta +q-\lambda_k)\phi_k=0\; \mathrm{in}\; \Omega,\quad \partial_\nu \phi_k+\alpha \phi_k{_{|\Gamma}}=0\; \mathrm{on}\; \Gamma.
\end{equation}


\subsection{Statement of the results}
We stick to the notations of the previous sections, that is to say that we still denote by $\lambda_k$, $\phi_k$ and $\psi_k$, $k \ge \mathbb{N}$, the $k$-th eigenvalue, eigenfunction and corresponding Dirichlet trace, respectively, of the operator $A$, and we write $\tilde{\lambda}_k$ (resp., $\tilde{\phi}_k$, $\tilde{\psi}_k$) instead of $\lambda_k$ (resp., $\phi_k$, $\psi_k$) when the potential $\tilde{q}$ is substituted for $q$.
Our first result is as follows.

\begin{theorem}\label{theorem1}
Let $q$ and $\tilde{q}$ be in $L^{r}(\Omega,\mathbb{R})$, where $r=n/2$ when $n \ge 4$ and $r >n/2$ when $n=3$, and let $\ell \in \mathbb{N}$. Then, the conditions
\begin{equation}\label{thm1}
\lambda_k=\tilde{\lambda}_k\ \mbox{for\ all}\ k \ge \ell\quad \mbox{and}\quad \psi_k=\tilde{\psi}_k\ \mbox{on}\ \Gamma\ \mbox{for\ all}\ k\ge 1,
\end{equation}
yield that $q=\tilde{q}$ in $\Omega$.
\end{theorem}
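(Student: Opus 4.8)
The plan is to follow the classical multidimensional Borg--Levinson scheme in its modern "Dirichlet-to-Neumann from boundary spectral data" incarnation, adapted to the Robin setting and to unbounded potentials. First I would encode the boundary spectral data into a meromorphic family of Dirichlet-to-Neumann-type operators. For $\mu \in \mathbb{C}\setminus\{\lambda_k\}$ and $\varphi \in H^{1/2}(\Gamma)$, let $u = u_q(\mu,\varphi)$ be the unique solution of $(-\Delta + q - \mu)u = 0$ in $\Omega$ with $u_{|\Gamma} = \varphi$, and set $\Lambda_q(\mu)\varphi := \partial_\nu u + \alpha u_{|\Gamma}$. Using the spectral decomposition of $A$, one gets a representation of $\langle \Lambda_q(\mu)\varphi,\psi\rangle$ as a series involving $(\mu-\lambda_k)^{-1}$ and the boundary integrals $\int_\Gamma \psi_k \overline{\psi}$, $\int_\Gamma \psi_k\overline{\varphi}$; this is where one must be careful because here $q$ is merely $L^{n/2}$, so the bilinear-form estimate \eqref{ii1} and the coercivity \eqref{co} are the tools that keep everything well-defined in $V$, $V^\ast$ and in $H^{\pm 1/2}(\Gamma)$. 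The key point extracted from \eqref{thm1} is that the singular parts of $\Lambda_q(\mu)$ and $\Lambda_{\tilde q}(\mu)$ at each $\lambda_k = \tilde\lambda_k$ ($k\ge \ell$) coincide because $\psi_k = \tilde\psi_k$, while the finitely many low eigenvalues $\lambda_1,\dots,\lambda_{\ell-1}$, $\tilde\lambda_1,\dots,\tilde\lambda_{\ell-1}$ only contribute poles in a bounded region; hence $\mu \mapsto \Lambda_q(\mu) - \Lambda_{\tilde q}(\mu)$, a priori meromorphic, extends holomorphically to a left half-line and in fact $\Lambda_q(\mu) = \Lambda_{\tilde q}(\mu)$ for all real $\mu$ sufficiently negative, by analytic continuation from the asymptotic agreement as $\mu \to -\infty$.

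Next I would convert the equality of DN maps into an integral identity. Given $\mu \ll 0$ and boundary data $\varphi_1,\varphi_2$, let $u_1$ solve $(-\Delta + q - \mu)u_1 = 0$, $u_{1|\Gamma} = \varphi_1$, and $u_2$ solve $(-\Delta + \tilde q - \mu)u_2 = 0$, $u_{2|\Gamma} = \varphi_2$. Multiplying, integrating, and using the generalized Green formula \eqref{ggf} together with $\Lambda_q(\mu) = \Lambda_{\tilde q}(\mu)$, the Robin boundary terms cancel and one is left with
\[
\int_\Omega (q - \tilde q)\, u_1 \overline{u_2}\, dx = 0,
\]
valid for all such solution pairs. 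The remaining and decisive step is the construction of complex geometric optics (CGO) solutions: for $\zeta \in \mathbb{C}^n$ with $\zeta\cdot\zeta = \mu$ (equivalently $|\mathrm{Re}\,\zeta|^2 - |\mathrm{Im}\,\zeta|^2 = \mu$, $\mathrm{Re}\,\zeta\cdot\mathrm{Im}\,\zeta = 0$) and large $|\zeta|$, build $u_1 = e^{\zeta\cdot x}(1 + r_1)$ and $u_2 = e^{-\bar\zeta'\cdot x}(1 + r_2)$ solving the two equations, with $\|r_j\|$ controlled and vanishing as $|\zeta|\to\infty$. For $q\in L^{n/2}$ this requires the Sylvester--Uhlmann/Haberman--Tataru-type resolvent estimates for $e^{-\zeta\cdot x}\Delta e^{\zeta\cdot x}$ in the scale $L^p$--$L^{p^\ast}$ (recall $p = 2n/(n+2)$), which is exactly the functional-analytic setting the introduction has set up; the correction terms then satisfy a fixed-point equation solvable by the estimate on $qu^2$ in \eqref{ii1} and the $L^{n/2}$ bound. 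Inserting these into the integral identity and passing to the limit $|\zeta|\to\infty$ along a family with $\mathrm{Re}\,\zeta - \mathrm{Re}\,\zeta' \to \xi$ fixed yields $\widehat{(q-\tilde q)}(\xi) = 0$ for all $\xi$, hence $q = \tilde q$.

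The main obstacle is the low regularity of the potential: every step that is routine for bounded or continuous $q$ — well-posedness of the forward problem, the mapping properties of $\Lambda_q(\mu)$, the meromorphic structure in $\mu$, and above all the CGO construction with quantitative remainder bounds — must be redone using only $q \in L^{n/2}$ (and $L^r$, $r>n/2$, when $n=3$, where the Sobolev exponents are borderline). I expect the CGO construction and the justification that the remainders vanish in the limit to be the technically heaviest part, relying on the Kenig--Ruiz--Sogge / Haberman--Tataru estimates; the passage from "equality of spectral data" to "equality of DN maps" is comparatively standard once the series representations are shown to converge in the right topologies, and the Robin term $\mathfrak{a}_0$, being continuous on $V\times V$ by Appendix \ref{appendixA}, does not create additional difficulty beyond bookkeeping in the boundary identity. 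A secondary subtlety is handling the finitely many eigenvalues below index $\ell$, but this is dealt with by the standard observation that a function meromorphic on $\mathbb{C}$ with only finitely many poles, all in a compact set, and decaying at $-\infty$, must be identically zero.
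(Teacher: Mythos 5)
Your overall architecture (boundary spectral data $\to$ asymptotics of a boundary map in the spectral parameter $\to$ integral identity $\to$ Fourier transform via exponential solutions) matches the paper's, but there are two genuine gaps in the way you pass from the hypotheses to the integral identity. First, your boundary operator is the wrong one: with Dirichlet input $u_{|\Gamma}=\varphi$, the map $\mu\mapsto u_q(\mu,\varphi)$ has its poles at the \emph{Dirichlet} eigenvalues of $-\Delta+q$, and its residues involve the Dirichlet spectral data, not the pairs $(\lambda_k,\psi_k)$ you are given. Indeed, pairing against $\phi_k$ via the Green formula gives $(\lambda_k-\mu)(u,\phi_k)=\langle \partial_\nu u+\alpha u,\psi_k\rangle$, which is circular for your $\Lambda_q(\mu)$. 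The object that admits a clean expansion in the Robin spectral data is the Robin-to-Dirichlet map, i.e.\ the solution of $\partial_\nu u+\alpha u_{|\Gamma}=g$ with Dirichlet trace as output, for which $(u,\phi_k)=(g,\psi_k)/(\lambda_k-\mu)$; this is what Lemmas \ref{lemma2} and \ref{lemma4} of the paper set up. Second, your claim that $\Lambda_q(\mu)=\Lambda_{\tilde q}(\mu)$ exactly for $\mu\ll 0$, justified by ``a meromorphic function with finitely many poles, all in a compact set, decaying at $-\infty$, is identically zero,'' is false: $\mu\mapsto 1/(\mu-a)-1/(\mu-b)$ is such a function and is not zero. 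When $\ell\ge 2$ the difference of the two boundary maps is a genuinely nonzero finite-rank function of $\mu$ (the term $R^\ell_\lambda$ in \eqref{RtoD}), which only decays like $|\mu|^{-2}$. The proof must carry this remainder into the high-frequency limit and check that it is beaten by the $O(\tau^2)$ growth of the boundary data norms, which is exactly what \eqref{Re} and \eqref{S1} do; asserting exact equality of the maps skips the whole point of handling $\ell\ge 2$.

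On the final step your route also diverges from the paper's, in a way that is legitimate but much heavier. You propose genuine complex geometric optics solutions $e^{\zeta\cdot x}(1+r)$ with correctors controlled by Kenig--Ruiz--Sogge/Haberman--Tataru estimates. The paper instead uses Isozaki's device: the inputs are the Robin traces of the \emph{free} exponentials $\mathfrak{e}_{\lambda_\tau,\omega}$ with $\lambda_\tau=(\tau+i)^2$, the solution is written as $-(A-\lambda_\tau)^{-1}(q\mathfrak{e}_{\lambda_\tau,\omega})+\mathfrak{e}_{\lambda_\tau,\omega}$, and the potential-dependent term is killed in the limit by the interpolated resolvent bound \eqref{re7} (Riesz--Thorin between the $L^2\to L^2$ decay $(2\tau)^{-1}$ and the $V^\ast\to V$ growth $C\tau$), which is where the hypothesis $r>n/2$ for $n=3$ (so that $\beta<1/2$) enters. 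No CGO construction or fixed-point argument is needed. If you repair the two gaps above (use the Robin-to-Dirichlet map, and keep the finite-rank remainder through the limit), your CGO variant could be made to work, but you would be importing substantially more harmonic analysis than the problem requires.
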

The claim of Theorem \ref{theorem1} was first established for smooth bounded potentials, in the peculiar case where $\ell=1$, by Nachman, Sylvester and Uhlmann in \cite{NSU}. In the same context (of smooth bounded potentials), their result was extended to $\ell \geq 1$ through a heuristic approach in \cite{Sm}.

In view of stating our stability results, we denote by $\ell^\infty$ (resp. $\ell^2$) the Banach (resp., Hilbert) space of bounded (resp. squared summable) sequences of complex numbers $(z_k)$
, equipped with the norm 
\[
\|(z_k)\|_{\ell^\infty}:=\sup_{k\ge 1}|z_k|\ \left({\rm resp.,}\ \|(z_k)\|_{\ell^2}:=\left(\sum_{k\ge 1}|z_k|^2\right)^{1/2}\right),
\]  
and let 
\[
\ell^2(L^2(\Gamma)):=\left\{ (w_k) \in L^2(\Gamma)^{\mathbb N}\; \mbox{such that}\; (\|w_k\|_{L^2(\Gamma)})\in \ell^2 \right\}
\]
be endowed with its natural norm
\[
\|(w_k)\|_{\ell^2(L^2(\Gamma))}:=\|(\|w_k\|_{L^2(\Gamma)})\|_{\ell^2}.
\]

\begin{theorem}\label{theorem2}
Fix $\aleph \in (0,\infty)$ and let
$(q,\tilde{q})\in \mathrm{Q}(r,\aleph)^2$, where $r=n/2$ when $n \ge 4$ and $r >n/2$ when $n = 3$, satisfy $q-\tilde{q} \in L^2(\Omega)$. Assume that $(\lambda_k-\tilde{\lambda}_k)\in \ell^\infty$ fulfills $\|(\lambda_k-\tilde{\lambda}_k)\|_{\ell^\infty}\le \aleph$ and that $(\psi_k-\tilde{\psi}_k)\in \ell^2(L^2(\Gamma))$. 
Then, we have
\begin{equation}\label{thm2}
\|q-\tilde{q}\|_{H^{-1}(\Omega)}\le C\left( \|(\lambda_k-\tilde{\lambda}_k)\|_{\ell^\infty}+ \|(\psi_k-\tilde{\psi}_k)\|_{\ell^2(L^2(\Gamma))} \right)^{2(1-2\beta)/(3(n+2))},
\end{equation}
where $\beta:=\max \left( 0,n(2-r)/(2r) \right)$ and $C$ is a positive constant depending only on $n$, $\Omega$, $\aleph$ and $\mathfrak{c}$.
\end{theorem}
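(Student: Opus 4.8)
My proposed argument rests on three ingredients: (i) the boundary spectral data determines, with quantitative control, the Robin-to-Dirichlet operator of the corresponding operator at any spectral parameter in the resolvent set; (ii) an integral identity expressing the Fourier transform of $q-\tilde{q}$ (extended by $0$ outside $\Omega$) through the difference of these operators tested against well-chosen solutions; and (iii) a family of high-energy complex-exponential solutions that stay \emph{uniformly bounded} on $\Omega$, so that the energy enters the estimates only polynomially --- this last point is what turns a logarithmic rate into a H\"older one. To set up (i): for $\mu<0$ with $-\mu\notin\{\lambda_k\}\cup\{\tilde{\lambda}_k\}$ and $g\in L^2(\Gamma)$, let $u=u_q(\mu,g)\in V$ solve $\mathfrak{a}(u,v)+\mu(u,v)=\langle g,v_{|\Gamma}\rangle$, $v\in V$, which by Subsection \ref{sec-BSD} is the variational solution of $(-\Delta+q+\mu)u=0$ in $\Omega$, $\partial_\nu u+\alpha u_{|\Gamma}=g$ on $\Gamma$. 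Testing against $\phi_k$ yields $(u,\phi_k)=(\lambda_k+\mu)^{-1}(g,\psi_k)_{L^2(\Gamma)}$, so the operator $\mathcal{R}_q(\mu)\colon g\mapsto u_{|\Gamma}$ has the representation $\mathcal{R}_q(\mu)g=\sum_{k\ge1}(\lambda_k+\mu)^{-1}(g,\psi_k)_{L^2(\Gamma)}\psi_k$ and is therefore completely determined by $\{(\lambda_k,\psi_k)\}_k$.

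Given $u=u_q(\mu,g)$ and $\tilde{u}=u_{\tilde{q}}(\mu,\tilde{g})$, I would then introduce the auxiliary solution $v:=u_{\tilde{q}}(\mu,g)$; the difference $u-v$ solves $(-\Delta+\tilde{q}+\mu)(u-v)=(\tilde{q}-q)u$ with homogeneous Robin condition and satisfies $(u-v)_{|\Gamma}=(\mathcal{R}_q(\mu)-\mathcal{R}_{\tilde{q}}(\mu))g$, so pairing this equation with $\tilde{u}$ via the generalized Green formula \eqref{ggf} and the symmetry of $\mathfrak{a}$ gives the identity $\int_\Omega(q-\tilde{q})u\tilde{u}\,dx=\langle(\mathcal{R}_{\tilde{q}}(\mu)-\mathcal{R}_q(\mu))g,\tilde{g}\rangle$. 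Writing $\varepsilon:=\|(\lambda_k-\tilde{\lambda}_k)\|_{\ell^\infty}+\|(\psi_k-\tilde{\psi}_k)\|_{\ell^2(L^2(\Gamma))}$ and expanding
\[
\mathcal{R}_q(\mu)-\mathcal{R}_{\tilde{q}}(\mu)=\sum_k\frac{\tilde{\lambda}_k-\lambda_k}{(\lambda_k+\mu)(\tilde{\lambda}_k+\mu)}\,\psi_k\otimes\psi_k+\sum_k\frac{(\psi_k-\tilde{\psi}_k)\otimes\psi_k+\tilde{\psi}_k\otimes(\psi_k-\tilde{\psi}_k)}{\tilde{\lambda}_k+\mu},
\]
I would bound each sum by the Cauchy--Schwarz inequality against the weights $\lambda_k+\lambda^\ast$, using that \eqref{co} makes $\{(\lambda_k+\lambda^\ast)^{-1/2}\phi_k\}_k$ orthonormal for the inner product $\mathfrak{a}(\cdot,\cdot)+\lambda^\ast(\cdot,\cdot)$ on $V$, whence $\sum_k(\lambda_k+\lambda^\ast)^{-1}|(g,\psi_k)_{L^2(\Gamma)}|^2\le C\|g\|_{L^2(\Gamma)}^2$; combined with Weyl's law $\lambda_k\sim\tilde{\lambda}_k\sim c_n k^{2/n}$ --- which I use to pick $-\mu$ near any prescribed large value with $\mathrm{dist}(-\mu,\{\lambda_k\}\cup\{\tilde{\lambda}_k\})\gtrsim|\mu|^{1-n/2}$ --- this yields $\|\mathcal{R}_q(\mu)-\mathcal{R}_{\tilde{q}}(\mu)\|_{\mathcal{L}(L^2(\Gamma))}\le C|\mu|^{n-1}\varepsilon$.

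For the special solutions, fix $\xi\in\mathbb{R}^n$ and take $|\mu|>|\xi|^2/4$. Since $n\ge3$ there exist $\zeta,\tilde{\zeta}\in\mathbb{C}^n$ with $\zeta\cdot\zeta=\tilde{\zeta}\cdot\tilde{\zeta}=-\mu$, $\zeta+\tilde{\zeta}=-\xi$, $|\mathrm{Im}\,\zeta|=|\mathrm{Im}\,\tilde{\zeta}|=1$ and $|\mathrm{Re}\,\zeta|=|\mathrm{Re}\,\tilde{\zeta}|=\sqrt{|\mu|+1}$; by the Faddeev/Sylvester--Uhlmann scheme, using \eqref{ii1} and the $L^p\to L^{p^\ast}$ resolvent estimate of Kenig--Ruiz--Sogge type available for $q\in\mathrm{Q}(r,\aleph)$, I would build solutions $u=e^{ix\cdot\zeta}(1+w_\zeta)$ and $\tilde{u}=e^{ix\cdot\tilde{\zeta}}(1+w_{\tilde{\zeta}})$ of $(-\Delta+q+\mu)u=0$ and $(-\Delta+\tilde{q}+\mu)\tilde{u}=0$ with $\|w_\zeta\|_{L^{p^\ast}(\Omega)}\le C|\mu|^{-(1-\beta)/2}$, the loss $\beta$ (which vanishes precisely when $n\ge4$ and $r=n/2$) reflecting the sub-critical integrability of the potential. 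Because $|\mathrm{Im}\,\zeta|$ stays bounded, $u$ and $\tilde{u}$ are bounded in $H^1(\Omega)$, and --- here is where the hypothesis $r>n/2$ in dimension three is used, to trace the normal derivatives on $\Gamma$ --- their Robin data satisfy $\|g\|_{L^2(\Gamma)}+\|\tilde{g}\|_{L^2(\Gamma)}\le C|\mu|^{b}$ for an explicit $b=b(n,\beta)$. Since $e^{ix\cdot\zeta}e^{ix\cdot\tilde{\zeta}}=e^{-ix\cdot\xi}$, the left-hand side of the above identity equals $\widehat{q-\tilde{q}}(\xi)$ up to an error which, using $q-\tilde{q}\in L^2(\Omega)$ and the remainder bound, is $O(|\mu|^{-\sigma})$ for some $\sigma=\sigma(\beta)>0$ that decreases to a limiting value as $\beta\to 1/2$.

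Putting everything together gives $|\widehat{q-\tilde{q}}(\xi)|\le C(|\mu|^{N}\varepsilon+|\mu|^{-\sigma})$ for all $|\xi|\le|\mu|^{1/2}$, with $N=N(n,\beta)$. The plan is then to split $\|q-\tilde{q}\|_{H^{-1}(\Omega)}^2=\int_{|\xi|<R}(1+|\xi|^2)^{-1}|\widehat{q-\tilde{q}}(\xi)|^2\,d\xi+\int_{|\xi|\ge R}(1+|\xi|^2)^{-1}|\widehat{q-\tilde{q}}(\xi)|^2\,d\xi$, to control the high-frequency part through $\|q-\tilde{q}\|_{L^r(\Omega)}\le 2\aleph$ and the Hausdorff--Young inequality (by a constant times $R^{-2(1-\beta)}$), and to optimize first $|\mu|$ for each frequency scale and then the cut-off $R$; this double optimization should produce \eqref{thm2} with the exponent $2(1-2\beta)/(3(n+2))$. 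I expect the main obstacle to be the third paragraph: constructing special solutions whose \emph{Robin} boundary data grow only polynomially in the energy --- which forces complex frequencies with bounded imaginary part and hence keeps $\sqrt{-\mu}$ inside the spectrum (whence the quantitative spectral-gap selection of the second paragraph) --- together with the remainder and boundary-trace estimates under the mere $L^r$-integrability of the potentials, the source of both the loss $\beta$ and the restriction $r>n/2$ in dimension three. Once all dependencies on the energy are polynomial, tracking the precise powers to reach the stated H\"older exponent is a fiddly but routine computation.
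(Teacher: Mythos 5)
Your overall architecture (spectral data $\to$ Robin-to-Dirichlet map at high energy $\to$ Alessandrini-type identity with bounded complex exponentials $\to$ Fourier transform of $q-\tilde{q}$ $\to$ two-parameter optimization) is the right one and is essentially the paper's. But there are two genuine gaps at the central technical steps, plus a quantitative mismatch. First, you write $\mathcal{R}_q(\mu)g=\sum_k(\lambda_k+\mu)^{-1}(g,\psi_k)\psi_k$ and subtract the two series termwise. The series \eqref{rep1} converges only in $L^2(\Omega)$, so its trace on $\Gamma$ cannot be read off term by term, and neither individual boundary series is known to converge; only the \emph{difference} makes sense, and establishing that it equals your termwise expression is precisely what requires work. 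The paper handles this by introducing a second spectral parameter, using the doubly-resolvent series of Lemma \ref{lemma4} (which does converge in $H^{1/2}(\Gamma)$) and the limit $\Re\mu\to-\infty$ of Lemma \ref{lemma5.1} to identify the boundary values; your proposal has no substitute for this step. Second, your special solutions are Faddeev-type, $u=e^{ix\cdot\zeta}(1+w_\zeta)$, and you must then put their Robin data $\partial_\nu u+\alpha u_{|\Gamma}$ in $L^2(\Gamma)$ with polynomial bounds, i.e.\ trace $\nabla w_\zeta$ on $\Gamma$ for a remainder controlled only in $L^{p^\ast}(\Omega)$ and with $q$ merely in $L^{r}(\Omega)$. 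You correctly flag this as the main obstacle, but it is not resolved, and it is avoidable: the paper takes the incident field to be the pure exponential $\mathfrak{e}_{\lambda_\tau,\omega}$ with $\lambda_\tau=(\tau+i)^2$, whose Robin data is explicit and $O(\tau)$ in $L^2(\Gamma)$, and produces the exact solution by the Robin resolvent itself via \eqref{sol1}, so that the only estimate needed is the interpolated bound \eqref{re7} of Proposition \ref{proposition2} --- no correction term $w_\zeta$ and no boundary trace of it ever appear.

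The quantitative issue is your choice of \emph{real} $\mu$ in spectral gaps selected by Weyl's law. This forces a gap of size only $|\mu|^{1-n/2}$, hence losses of order $|\mu|^{n-1}$ in the bound on $\mathcal{R}_q(\mu)-\mathcal{R}_{\tilde q}(\mu)$, on top of the polynomial growth of the boundary data. The paper instead keeps $\Im\lambda_\tau=2\tau\neq 0$, so $\operatorname{dist}(\lambda_\tau,\sigma(A))\ge 2\tau$ uniformly for both operators (no gap selection, cf.\ \eqref{re4}), and the data term in \eqref{4.12} grows only like $\tau\delta$. Optimizing $C\tau^{-(1-2\beta)}+C\tau\delta$ over a ball of radius $\tau^{\varrho}$ is exactly what produces the exponent $2(1-2\beta)/(3(n+2))$; with your $|\mu|^{N}\varepsilon$, $N\ge n-1$, the same optimization yields a strictly smaller H\"older exponent, so the theorem \emph{as stated} would not follow even after the two gaps above are filled.
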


\begin{remark}\label{remark1}
{\rm
(i) It is worth noticing that we have $\beta=0$ when $n \ge 4$, whereas $\beta \in [0,1/2)$ when $n=3$. Moreover, in the latter case we see that $\beta$ converges to $1/2$ (resp., $0$) as $r$ approaches $3/2$ (resp., $2$) from above (resp., below). \\
(ii) We have $q-\tilde{q} \in L^2(\Omega)$ for all $(q,\tilde{q}) \in \mathrm{Q}(n/2,\aleph)^2$, provided that $n \ge 4$. Nevertheless, this is no longer true when $n=3$, even if $(q,\tilde{q})$ is taken in $\mathrm{Q}(r,\aleph)^2$ with $r \in (n/2,2)$. Hence the additional requirement of Theorem \ref{theorem2} that $q-\tilde{q} \in L^2(\Omega)$ in the three-dimensional case.\\
(iii) When $q-\tilde{q} \in L^\infty(\Omega)$, we have $(\lambda_k-\tilde{\lambda}_k)\in \ell^\infty$ and $\|(\lambda_k-\tilde{\lambda}_k)\|_{\ell^\infty}\le \|q-\tilde{q}\|_{L^\infty(\Omega)}$, by the min-max principle. Thus, Theorem \ref{theorem2} remains valid by replacing the condition $\|(\lambda_k-\tilde{\lambda}_k)\|_{\ell^\infty}\le \aleph$ by the stronger assumption $\|q-\tilde{q}\|_{L^\infty(\Omega)}\le \aleph$.
\\
(iv) Assume that $|\alpha(x)|>0$ for a.e. $x \in \Gamma$. Then, the statement of Theorem \ref{theorem1} remains valid upon replacing the condition $\psi_k=\tilde{\psi}_k$ by 
$\partial_\nu \phi_k=\partial_\nu \tilde{\phi}_k$ for all  $k\ge 1$, in \eqref{thm1}. Moreover, if $1/\alpha \in L^\infty (\Gamma)$, then we may substitute 
 $\|(\psi_k-\tilde{\psi}_k)\|_{\ell^2(L^2(\Gamma))}$ by $\|(\partial_\nu \phi_k-\partial_\nu \tilde{\phi}_k)\|_{\ell^2(L^2(\Gamma))}$ on the right hand side of \eqref{thm2} in Theorem \ref{theorem2}.
}
\end{remark}

To the best of our knowledge, there is no comparable stability result available in the mathematical literature for Robin boundary conditions, even when the potentials are assumed to be bounded. Nevertheless, it should be pointed out that the variable coefficients case was recently addressed by \cite{BCKPS} in the framework of Dirichlet boundary conditions. 

Further downsizing the data needed for retrieving the unknown potential, we seek a stability inequality requesting a local Dirichlet boundary measurement of the eigenfunctions only, i.e. boundary observation of the $\psi_k$'s and $\tilde{\psi}_k$'s that is performed on a strict subset of $\Gamma$. For this purpose we assume that $\Gamma$ is connected and we consider a $C^{1,1}$-connected neighborhood  $\Omega_0$ of $\Gamma$ in $\overline{\Omega}$, a fixed nonempty open subset $\Gamma_{\ast}$ of $\Gamma$, and for all $\vartheta \in (0,\infty)$ we introduce the function $\Psi_\vartheta : [0,\infty) \to \mathbb{R}$ as
\begin{equation}
\label{def-Phi}
\Psi_\vartheta(t):=
\left\{ 
\begin{array}{cl} 
0 & \mbox{if}\ t=0
\\
|\ln t|^{-\vartheta} & \mbox{if}\  t \in (0,1/e)
\\ 
t &  \mbox{if}\  t \in [1/e,\infty).
\end{array}
\right.
\end{equation}
The corresponding local stability estimate can be stated as follows.

\begin{theorem}
\label{theorem3}
For $\aleph \in (0,\infty)$ fixed, let $(q,\tilde{q}) \in \mathrm{Q}(n,\aleph)^2$ satisfy $q=\tilde{q}$ on $\Omega_0$. 
Assume that $\alpha \in C^{0,1}(\Gamma)$, and suppose that $(\lambda_k-\tilde{\lambda}_k)\in \ell^\infty$ and that $(k^{\mathfrak{t}}(\psi_k-\tilde{\psi}_k))\in \ell^2(L^2(\Gamma))$ for some $\mathfrak{t}>4/n+1$, with
\[
\|(\lambda_k-\tilde{\lambda}_k)\|_{\ell^\infty}\le \aleph,\quad \|(k^\mathfrak{t}(\psi_k-\tilde{\psi}_k))\|_{\ell^2(L^2(\Gamma))}\le \aleph.
\]
Then there exist two constants $C>0$ and $\vartheta>0$, both of them depending only on $n$, $\Omega$, $\Omega_0$, $\Gamma_{\ast}$, $\aleph$, $\mathfrak{c}$ and $\|\alpha\|_{C^{0,1}(\Gamma)}$, such that we have:
\begin{equation}
\label{thm3}
\|q-\tilde{q}\|_{H^{-1}(\Omega)}\le C\Psi_\vartheta\left( \|(\lambda_k-\tilde{\lambda}_k)\|_{\ell^\infty}+ \|(k^{-\mathfrak{t}+2/n}(\psi_k-\tilde{\psi}_k))\|_{\ell^2(H^1(\Gamma_{\ast}))} \right).
\end{equation}
\end{theorem}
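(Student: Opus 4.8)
The plan is to follow the two-step scheme that underlies the full-data results (Theorems~\ref{theorem1}--\ref{theorem2}): first convert the boundary spectral data into an estimate for a boundary operator of the associated Robin--Schr\"odinger problem at a single fixed energy, and then feed that estimate into a stability bound for the corresponding \emph{partial-data} Calder\'on problem — a bound that is only of logarithmic type, which is what produces the modulus $\Psi_\vartheta$ in \eqref{thm3}. I would fix once and for all $\mu=-\tau$, with $\tau>0$ large enough (depending only on $n$, $\Omega$, $\aleph$, $\mathfrak{c}$) that $\mu$ lies in the resolvent set of $A$ and of the operator $\widetilde A$ obtained by substituting $\tilde q$ for $q$, which is licit by \eqref{co} since $\Omega$ is bounded. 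For $g\in H^{-1/2}(\Gamma)$ let $u=u_q(\mu)\in W$ solve $(-\Delta+q-\mu)u=0$ in $\Omega$, $\partial_\nu u+\alpha u_{|\Gamma}=g$ on $\Gamma$ — well posed by \eqref{co} and the analysis around \eqref{vf} — and set $\mathcal R_q(\mu)g:=u_{|\Gamma}$, with $\mathcal R_{\tilde q}(\mu)$ defined analogously. Testing the variational formulation against $\phi_k$ and using \eqref{ee} gives the representation $\mathcal R_q(\mu)g=\sum_{k\ge1}(\lambda_k-\mu)^{-1}\langle g,\psi_k\rangle\,\psi_k$, and likewise for $\tilde q$.

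\textbf{Step~1.} I would subtract the two series, split each summand into a term carrying the scalar factor $(\lambda_k-\mu)^{-1}-(\tilde\lambda_k-\mu)^{-1}$ and a term carrying $\psi_k-\tilde\psi_k$, and restrict inputs and outputs to $\Gamma_{\ast}$. A fixed finite block of low modes is then controlled by $\|(\lambda_k-\tilde\lambda_k)\|_{\ell^\infty}$ and by $\|\psi_k-\tilde\psi_k\|_{H^1(\Gamma_{\ast})}$, whereas the tail is made summable by Weyl's law $\lambda_k\asymp k^{2/n}$ together with a priori polynomial bounds on $\|\psi_k\|_{H^1(\Gamma)}$, $\|\tilde\psi_k\|_{H^1(\Gamma)}$ coming from \eqref{ee}, from $q,\tilde q\in\mathrm{Q}(n,\aleph)$, from $\alpha\in C^{0,1}(\Gamma)$ and from elliptic estimates; the weight $k^{\mathfrak t}$ with $\mathfrak t>4/n+1$ is exactly what forces convergence of those tails, and interpolating between the a priori $k^{\mathfrak t}$-weighted $L^2(\Gamma)$ control and the $k^{-\mathfrak t+2/n}$-weighted $H^1(\Gamma_{\ast})$ smallness is what yields the exponent shift $2/n$ appearing in \eqref{thm3}. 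Writing $\eta$ for the quantity inside $\Psi_\vartheta$ in \eqref{thm3}, this would give
\[
\bigl\|\mathcal R_q(\mu)-\mathcal R_{\tilde q}(\mu)\bigr\|_{\mathcal L(H^1(\Gamma_{\ast}),L^2(\Gamma_{\ast}))}\le C\,\eta .
\]

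\textbf{Step~2.} Since $\Gamma$ is connected and $q=\tilde q$ on the neighbourhood $\Omega_0$ of $\Gamma$, the difference $q-\tilde q$ extends by zero to an $L^n$ function supported in the compact set $\overline\Omega\setminus\Omega_0$, at positive distance from $\Gamma$. I would then invoke — reproving it under the present $L^n$ integrability where needed — the logarithmic stability estimate for the partial-data Calder\'on problem, i.e. the existence of $C>0$ and $\vartheta'>0$ depending only on the admissible parameters such that
\[
\|q-\tilde q\|_{H^{-1}(\Omega)}\le C\,\Psi_{\vartheta'}\!\bigl(\|\mathcal R_q(\mu)-\mathcal R_{\tilde q}(\mu)\|_{\mathcal L(H^1(\Gamma_{\ast}),L^2(\Gamma_{\ast}))}\bigr).
\]
Its ingredients are the Faddeev-type complex geometrical optics solutions $u_j=e^{\rho_j\cdot x}(1+r_j)$ with $\rho_1+\rho_2=i\xi$ and remainder bounds valid for $L^n$ potentials; Alessandrini's identity relating $\int_\Omega(q-\tilde q)u_1u_2\,dx$ to the boundary bilinear form attached to $\mathcal R_q(\mu)-\mathcal R_{\tilde q}(\mu)$; a boundary Carleman estimate of Bukhgeim--Uhlmann type controlling the contribution of $\Gamma\setminus\Gamma_{\ast}$, where it is crucial that $q=\tilde q$ near $\Gamma$ annihilates the otherwise uncontrolled boundary term and where the location of $\Gamma_{\ast}$ enters; and, finally, Fourier inversion with a frequency cut-off $|\xi|\le R$ optimized against the exponentially large norms of the geometrical optics solutions, which is what produces the logarithm.

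\textbf{Step~3 and the main obstacle.} Since $\mu$ is fixed, $C$ in Step~1 is a constant, and the monotonicity and concavity near $0$ of $t\mapsto|\ln t|^{-\vartheta'}$ give $\Psi_{\vartheta'}(C\eta)\le C'\Psi_\vartheta(\eta)$ for a suitable $\vartheta\in(0,\vartheta']$, the regime $\eta\ge1/e$ being covered by the a priori bound $\|q-\tilde q\|_{H^{-1}(\Omega)}\le C\aleph$ and the fact that $\Psi_\vartheta(t)=t$ there; combining with Steps~1 and~2 gives \eqref{thm3}. The hard part will be Step~2: obtaining the logarithmic partial-data stability when $q$ is merely in $L^n(\Omega)$, which is precisely why $q\in\mathrm{Q}(n,\aleph)$ is imposed here instead of the weaker $\mathrm{Q}(n/2,\aleph)$ of Theorems~\ref{theorem1}--\ref{theorem2} — the geometrical optics construction and, above all, the boundary Carleman estimate must be run at this reduced regularity and reconciled with the Robin condition, and bounding the boundary integral over $\Gamma\setminus\Gamma_{\ast}$ in Alessandrini's identity is the crux; it is also what forces the connectedness of $\Gamma$ and the requirement that $q=\tilde q$ on a full neighbourhood of $\Gamma$. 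A secondary, more technical difficulty is the bookkeeping in Step~1: extracting the clean bound $C\eta$ from only $\ell^2$-type control of the $k^{\mathfrak t}$-weighted eigenfunction traces on $\Gamma_{\ast}$, which requires the interpolation and Weyl-law estimates indicated above.
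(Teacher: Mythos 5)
Your overall architecture (spectral data $\to$ boundary operator at a fixed energy $\to$ partial-data Calder\'on stability) is genuinely different from the paper's, and it founders precisely at the step you yourself identify as the crux. The logarithmic stability estimate you invoke in Step~2 for the partial-data Calder\'on problem is not available for an \emph{arbitrary} nonempty open subset $\Gamma_\ast$ of $\Gamma$: the Bukhgeim--Uhlmann boundary Carleman estimate controls the unmeasured boundary contribution only when $\Gamma_\ast$ contains (a neighbourhood of) the portion $\{x\in\Gamma:\ \partial_\nu\varphi\le 0\}$ of the boundary determined by the linear Carleman weight $\varphi$, and the known quantitative versions inherit this geometric restriction, besides being stated for bounded potentials. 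The hypothesis $q=\tilde q$ on $\Omega_0$ does not ``annihilate'' the boundary term over $\Gamma\setminus\Gamma_\ast$ in Alessandrini's identity as you suggest; what it does permit is a quantitative unique continuation (propagation of smallness) of the Cauchy data from $\Gamma_\ast$ to the rest of $\Gamma$, and making that quantitative is exactly the ingredient missing from your Step~2 --- so the gap is real, not merely technical.

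The paper takes a shorter route that bypasses the partial-data Calder\'on problem altogether. Since $q=\tilde q$ on $\Omega_0$, each difference $u_k:=\phi_k-\tilde\phi_k$ solves $(-\Delta+q-\lambda_k)u_k=(\lambda_k-\tilde\lambda_k)\tilde\phi_k$ in $\Omega_0$ with the homogeneous Robin condition on $\Gamma$; that Robin condition converts the $H^1(\Gamma_\ast)$ control of $\psi_k-\tilde\psi_k$ into full Cauchy data on $\Gamma_\ast$ (this is where $\alpha\in C^{0,1}(\Gamma)$ enters), and the quantitative unique continuation estimate of \cite{Ch2022}, combined with the $H^2$ bound of Lemma \ref{lemma5.0} and Weyl's law \eqref{waf}, propagates this smallness to all of $\Gamma$ at the price of a factor $e^{\mathfrak{b}r^{-\gamma}}$ against a term $r^{s/4}$. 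Summing over $k$ with the weights $k^{-\mathfrak t}$ (this is where $\mathfrak t>4/n+1$ and the shift $2/n$ come from, via Cauchy--Schwarz against the a priori bound $\|(k^{\mathfrak t}(\psi_k-\tilde\psi_k))\|_{\ell^2(L^2(\Gamma))}\le\aleph$, rather than via the interpolation you describe) bounds the \emph{full-data} quantity $\delta$ of Theorem \ref{theorem2}, and optimizing in $r$ produces the logarithm. So the modulus $\Psi_\vartheta$ in \eqref{thm3} originates from unique continuation across $\Omega_0$, not from CGO frequency cut-offs; if you want to rescue your scheme, the tool from \cite{Ch2022} is what must replace the Carleman-estimate step, at which point you may as well apply it directly to the eigenfunction differences and reduce to Theorem \ref{theorem2}.
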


\begin{remark}\label{remark2}
{\rm
(i) Bearing in mind that the $k$-th eigenvalue, $k \ge 1$, of the unperturbed Dirichlet Laplacian (i.e. the operator $A$ associated with $q=0$ in $\Omega$ and $\alpha=0$ on $\Gamma$) scales like $k^{2/n}$ when $k$ becomes large, see e.g. \cite[Theorem III.36 and Remark III.37]{Be}, we obtain by combining the min-max principle with \eqref{ii1}, that for all
$q\in \mathrm{Q}(n,\aleph)$,
\begin{equation}\label{waf}
C^{-1}k^{2/n}\le 1+|\lambda_k|\le Ck^{2/n},\ k\ge 1,
\end{equation}
where $C \in (1,\infty)$ is a constant depending only  on $n$, $\Omega$, $\mathfrak{c}$ and $\aleph$.
In light of Lemma \ref{lemma5.0} below, establishing the $H^2$-regularity of the eigenfunctions $\phi_k$, $k \ge 1$, of $A$, and the energy estimate \eqref{5.1}, it follows from \eqref{waf} that $(k^{-\mathfrak{t}+2/n}\psi_k)\in \ell^2(H^1(\Gamma))$. Therefore, we have $\|(k^{-\mathfrak{t}+2/n}(\psi_k-\tilde{\psi}_k))\|_{\ell^2(H^1(\Gamma_{\ast}))}<\infty$ on the right hand side of \eqref{thm3}.
\\
(ii) Assume that $\alpha\in C^1(\Gamma)$ and $1/\alpha \in L^\infty (\Gamma_\ast)$. Then, the statement of Theorem \ref{theorem3} remains valid upon replacing $\|(k^{-\mathfrak{t}+2/n}(\psi_k-\tilde{\psi}_k))\|_{\ell^2(H^1(\Gamma_\ast))}$ by $\|(k^{-\mathfrak{t}+2/n}(\partial_\nu \phi_k-\partial_\nu\tilde{\phi}_k))\|_{\ell^2(H^1(\Gamma_\ast))}$ in \eqref{thm3}. 
}
\end{remark}


\subsection{A short bibliography of the existing literature}
\label{sec-literature}
The first published uniqueness result for the multidimensional Borg-Levinson problem can be found in \cite{NSU}. The breakthrough idea of the authors of this article was to relate the inverse spectral problem under analysis to the one of determining the bounded potential by the corresponding elliptic Dirichlet-to-Neumann map. This can be understood from the fact that, the Schwartz kernel of the elliptic Dirichlet-to-Neumann operator can be, at least heuristically, fully expressed in terms of the eigenvalues and the normal derivatives of the eigenfunctions. Later on, \cite{Is} proved that the result of \cite{NSU}, which assumes complete knowledge of the boundary spectral data, remains valid when finitely many of them remain unknown. 

The stability issue for multidimensional Borg-Levinson type problems was first examined in \cite{AS}. The authors proceed by
relating the spectral data to the corresponding hyperbolic Dirichlet-to-Neumann operator, which stably determines  the bounded electric potential. We refer the reader to \cite{BCY1,BCY2,BD} for alternative inverse stability results based on this approach.

In all the aforementioned results, the number of unknown spectral data is at most finite (that is to say that the data are either complete or incomplete). Nevertheless, it was proved in
\cite{CS} that asymptotic knowledge of the boundary spectral data is enough to H\"older stably retrieve the bounded potential. This result was improved in 
\cite{KKS,So} by removing all quantitative information on the eigenfunctions of the stability inequality, at the expense of an additional summability condition on their boundary measurements. The same approach was adapted to magnetic Laplacians in \cite{BCDKS}.

In all the articles cited above in this section, the unknown potential is supposed to be bounded. The unique determination of unbounded potentials by either complete or incomplete boundary spectral data is discussed in \cite{PS, Po}, whereas the stability issue for the same problem, but in the variable coefficients case, is examined in \cite{BCKPS}. As for the treatment of the inverse problem of determining the unbounded potential from asymptotic knowledge of the spectral data, we refer the reader to \cite{BKMS} for the uniqueness issue, and to \cite{KS}
for the stability issue.

All the above mentioned results were obtained for multidimensional Laplace operators endowed with Dirichlet boundary conditions, except for \cite{NSU} which proved that full knowledge of the boundary spectral data of the Robin Laplacian uniquely determines the unknown electric potential, and for \cite{BCDKS} where the case of Neumann Laplacians is examined. But, apart from the claim, based on a  heuristic approach, of \cite{Sm}, that incomplete knowledge of the spectral data of the multidimensional Robin Laplacian uniquely determines the unknown bounded potential, it seems that, even for a bounded unknown potential $q$, there is no reconstruction result of $q$ by incomplete spectral data, available in the mathematical literature for such operators. In the present article we prove not only unique identification by incomplete spectral data, but also stable determination by either full or local boundary spectral data, of the singular potential of the multidimensional Robin Laplacian.


\subsection{Outline}
The remaining part of this paper is structured as follows.
In Section \ref{sec-pre} we gather several technical results  which are needed by the proof of the three main results of this article. Then we proceed with the proof of
Theorems \ref{theorem1}, \ref{theorem2} and \ref{theorem3} in Section \ref{sec-proof}.

\section{Preliminaries}
\label{sec-pre}

In this section we collect several preliminary results that are needed by the proof of the main results of this article. We start by noticing, upon applying \eqref{co} with $u=\phi_k$, $k\ge 1$, that 
\begin{equation}\label{lb}
\lambda_k>-\lambda^\ast,\quad k\ge 1.
\end{equation}


\subsection{Resolvent estimates}

By \cite[Corollary 2.39]{Mc}, the operator $A-\lambda : V \to V^\ast$ has a bounded inverse whenever $\lambda \in \rho (A):=\mathbb{C}\setminus \sigma(A)$, the resolvent set of $A$. Furthermore, for all $f \in V^\ast$ we have
\begin{equation}\label{rf}
(A-\lambda)^{-1}f=\sum_{k\ge 1} \frac{\langle f , \phi_k \rangle}{\lambda_k-\lambda}\phi_k,
\end{equation}
where the series converges in $V$. 
For further use, we now establish that the resolvent $(A-\lambda)^{-1}$ may be regarded as a bounded operator from $H$ into the space $K:=\{u\in H;\; Au \in H\}$ endowed with the norm
\[
\|u\|_K:=\|u\|_H+\|Au\|_H,\quad u\in K.
\]

\begin{lemma}
\label{lemma1}
For all $\lambda \in \rho(A)$, the operator $(A-\lambda)^{-1}$ is bounded from $H$ into $K$.  Furthermore, we have
\begin{align}
(A-\lambda)^{-1}(A-\lambda )u=u,\quad u\in K,\label{l1.1}
\\
(A-\lambda)(A-\lambda)^{-1}f=f,\quad f\in H.\label{l1.2}
\end{align}
\end{lemma}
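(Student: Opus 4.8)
The plan is to establish the boundedness and the two identities by exploiting the spectral representation \eqref{rf} together with the fact that $A$ acts diagonally on the orthonormal basis $\{\phi_k\}$, and the fact that the $\phi_k$ lie in $V$ with $A\phi_k = \lambda_k \phi_k$ in $V^\ast$. The key observation is that for $f \in H \subset V^\ast$, we have $\langle f, \phi_k\rangle = (f,\phi_k)$, so \eqref{rf} reads $u := (A-\lambda)^{-1}f = \sum_{k\ge1} \frac{(f,\phi_k)}{\lambda_k-\lambda}\phi_k$, with convergence in $V$ (hence in $H$). One then needs to show $Au \in H$, for which the natural candidate is $Au = \sum_{k\ge1} \frac{\lambda_k (f,\phi_k)}{\lambda_k-\lambda}\phi_k$.

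First I would verify that this last series converges in $H$: since $|\lambda_k/(\lambda_k-\lambda)| \le 1 + |\lambda|/\mathrm{dist}(\lambda,\sigma(A))$ is bounded uniformly in $k$ (using $\lambda\in\rho(A)$ and $\lambda_k\to\infty$), Parseval gives $\sum_k |\lambda_k (f,\phi_k)/(\lambda_k-\lambda)|^2 \le C_\lambda \sum_k |(f,\phi_k)|^2 = C_\lambda\|f\|_H^2 < \infty$, so the series defines an element $g \in H$ with $\|g\|_H \le C_\lambda \|f\|_H$. Next I must check that this $g$ really equals $Au$ in $V^\ast$, i.e. $\langle Au, v\rangle = \mathfrak{a}(u,v) = (g,v)$ for all $v\in V$. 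By continuity of $\mathfrak{a}(\cdot,v)$ and convergence of the partial sums $u_N = \sum_{k\le N}\frac{(f,\phi_k)}{\lambda_k-\lambda}\phi_k$ in $V$, we have $\mathfrak{a}(u,v) = \lim_N \mathfrak{a}(u_N,v) = \lim_N \sum_{k\le N}\frac{(f,\phi_k)}{\lambda_k-\lambda}\mathfrak{a}(\phi_k,v) = \lim_N \sum_{k\le N}\frac{\lambda_k(f,\phi_k)}{\lambda_k-\lambda}(\phi_k,v) = (g,v)$, the last equality by $H$-convergence of the partial sums of $g$. This shows $u \in K$ with $\|u\|_K = \|u\|_H + \|g\|_H \le C'_\lambda\|f\|_H$, establishing boundedness; it also gives \eqref{l1.2}, since $Au = g$ and $(A-\lambda)(A-\lambda)^{-1}f = g - \lambda u = \sum_k (f,\phi_k)\phi_k = f$.

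For \eqref{l1.1}, take $u \in K$, so $Au \in H$; writing $u = \sum_k (u,\phi_k)\phi_k$ in $H$, I would show $(u,\phi_k) = \langle Au - \lambda u, \phi_k\rangle/(\lambda_k - \lambda)$. Indeed $\langle Au,\phi_k\rangle = \mathfrak{a}(u,\phi_k) = \overline{\mathfrak{a}(\phi_k,u)} = \overline{\lambda_k(\phi_k,u)} = \lambda_k(u,\phi_k)$ (using $\mathfrak{a}$ Hermitian and $\lambda_k,$ $(u,\phi_k)$ handled via conjugation carefully, or just $\mathfrak{a}(u,\phi_k)=\lambda_k(u,\phi_k)$ directly from self-adjointness of $A$ and $A\phi_k=\lambda_k\phi_k$), so $((A-\lambda)u,\phi_k) = (\lambda_k-\lambda)(u,\phi_k)$, whence by \eqref{rf} applied to $f = (A-\lambda)u \in H$, $(A-\lambda)^{-1}(A-\lambda)u = \sum_k \frac{((A-\lambda)u,\phi_k)}{\lambda_k-\lambda}\phi_k = \sum_k (u,\phi_k)\phi_k = u$.

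The main obstacle I anticipate is the bookkeeping around the pairing $\langle Au, v\rangle$ versus the $H$-inner product when passing limits: one must be careful that $Au$ is a priori only in $V^\ast$ and only lands in $H$ a posteriori, so the identification $g = Au$ must be argued at the level of the form $\mathfrak{a}$ rather than assumed. A minor subtlety is the uniform bound on $|\lambda_k/(\lambda_k-\lambda)|$ — this uses that only finitely many $\lambda_k$ can be near $\lambda$ and that $\lambda_k\to+\infty$, so the ratio stays bounded; this should be stated cleanly. Everything else is routine Hilbert-space spectral theory.
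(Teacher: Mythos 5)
Your proof is correct and follows essentially the same route as the paper: the spectral representation \eqref{rf}, the Parseval bound via the uniform bound on $\lambda_k/(\lambda_k-\lambda)$, and the identity $((A-\lambda)u,\phi_k)=(\lambda_k-\lambda)(u,\phi_k)$ for $u\in K$. The only difference is that you verify the identification $Au=\sum_k \lambda_k(\lambda_k-\lambda)^{-1}(f,\phi_k)\phi_k$ by hand through the form $\mathfrak{a}$, where the paper simply cites \cite[Theorem 2.37]{Mc}.
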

\begin{proof}
Put $u:=(A-\lambda)^{-1}f$ where $f\in H$ is fixed. Then, we have $(u , \phi_k )=(f,\phi_k)/(\lambda_k-\lambda)$ for all $k \ge 1$, from \eqref{rf}, whence
\begin{equation}
\label{es8}
Au=\sum_{k\ge1} \frac{\lambda_k}{\lambda_k-\lambda} (f,\phi_k) \phi_k,
\end{equation}
according to \cite[Theorem 2.37]{Mc}, the series being convergent in $V^\ast$.
Moreover, since
$$\sum_{k\ge 1} \frac{\lambda_k^2}{|\lambda_k-\lambda|^2} | (f,\phi_k) |^2 \le \| (\lambda_k / (\lambda_k-\lambda)) \|_{\ell^\infty}^2 \| f \|_H^2<\infty, $$
by the Parseval theorem, the right hand side on \eqref{es8} lies in $H$. Therefore, we have $Au\in H$ and 
$$\| A u \|_H \le \| (\lambda_k / (\lambda_k-\lambda)) \|_{\ell^\infty} \| f \|_H,$$ 
and consequently
$u \in K$ and 
$$\| u \|_K \le \| ((1+\lambda_k) / (\lambda_k-\lambda)) \|_{\ell^\infty} \| f \|_H.$$ 
Next, we pick $u\in K$ and set $f=(A-\lambda )u$. Then, we have $((A-\lambda)u,\phi_k)=(\lambda_k-\lambda)(u,\phi_k)$ for all $k\ge 1$, by \cite[Theorem 2.37]{Mc}, and hence
\begin{align*}
(A-\lambda)^{-1}f & =\sum_{k\ge 1}\frac{((A-\lambda)u,\phi_k)}{\lambda_k-\lambda}\phi_k \\
& =\sum_{k\ge 1} (u,\phi_k) \phi_k. 
\end{align*}
This establishes that $(A-\lambda)^{-1}f=u$, which yields \eqref{l1.1}. Finally, since \eqref{l1.2} follows readily from \eqref{es8}, the proof of the lemma is complete.
\end{proof}

\begin{proposition}\label{proposition1}
Let $q\in \mathrm{Q}(n/2,\aleph)$ and let $\lambda \in \rho(A)$. Then, for all $f \in V^\ast$, the following estimate
\begin{equation}
\label{re2}
\|(A-\lambda)^{-1}f\|_V \le C \|((\lambda_k+\lambda^\ast)/(\lambda_k-\lambda))\|_{\ell^\infty} \|f\|_{V^\ast}
\end{equation}
holds with $C=\kappa^{-1/2} \| (A+\lambda^\ast)^{-1}\|_{\mathcal{B}(V^\ast,V)}$, where $\mathcal{B}(V^\ast,V)$ denotes the space of linear bounded operators from $V^\ast$ to $V$.
Moreover, in the special case where $f \in H$, we have
\begin{equation}
 \label{re1}
 \|(A-\lambda)^{-1}f\|_H\le \|(1/(\lambda_k-\lambda))\|_{\ell^\infty}\|f\|_H.
 \end{equation}
\end{proposition}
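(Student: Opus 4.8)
\textit{Proof plan.}
Identity \eqref{re1} is immediate from the spectral representation \eqref{rf} and Parseval's identity, whereas \eqref{re2} is obtained by combining the coercivity inequality \eqref{co} with \eqref{rf}, the latter used both at $\lambda$ and at $-\lambda^\ast$, which belongs to $\rho(A)$ by \eqref{lb}. For \eqref{re1}, I fix $f\in H$ and put $u:=(A-\lambda)^{-1}f$. Since $f\in H\subset V^\ast$ we have $\langle f,\phi_k\rangle=(f,\phi_k)$ for all $k$, where $(\cdot,\cdot)$ is the scalar product of $H$, so \eqref{rf} gives $(u,\phi_k)=(f,\phi_k)/(\lambda_k-\lambda)$; Parseval's identity then yields
\[
\|u\|_H^2=\sum_{k\ge1}\frac{|(f,\phi_k)|^2}{|\lambda_k-\lambda|^2}\le\|(1/(\lambda_k-\lambda))\|_{\ell^\infty}^2\sum_{k\ge1}|(f,\phi_k)|^2=\|(1/(\lambda_k-\lambda))\|_{\ell^\infty}^2\|f\|_H^2,
\]
which is \eqref{re1}. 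Here $\|(1/(\lambda_k-\lambda))\|_{\ell^\infty}=\mathrm{dist}(\lambda,\sigma(A))^{-1}$ is finite because $\lambda\in\rho(A)$ and $\sigma(A)$ is closed.

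For \eqref{re2}, let $f\in V^\ast$ and again set $u:=(A-\lambda)^{-1}f\in V$. Applying \eqref{co} to $u$, and using $\langle Au,u\rangle=\mathfrak{a}(u,u)$ together with $\langle u,u\rangle=\|u\|_H^2$ (via the embedding $H\hookrightarrow V^\ast$), we obtain
\[
\kappa\|u\|_V^2\le\mathfrak{a}(u,u)+\lambda^\ast\|u\|_H^2=\langle(A+\lambda^\ast)u,u\rangle.
\]
To evaluate the right-hand side I would use \eqref{rf} to write $u=\sum_{k\ge1}\langle f,\phi_k\rangle(\lambda_k-\lambda)^{-1}\phi_k$, convergent in $V$. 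Since $\mathfrak{a}$ is Hermitian with $\mathfrak{a}(\phi_k,v)=\lambda_k(\phi_k,v)$ for $v\in V$, one gets $\langle(A+\lambda^\ast)u,\phi_k\rangle=(\lambda_k+\lambda^\ast)(u,\phi_k)=(\lambda_k+\lambda^\ast)\langle f,\phi_k\rangle(\lambda_k-\lambda)^{-1}$ for each $k$, and hence, by continuity of the $V^\ast$--$V$ pairing along the series together with the orthonormality of $\{\phi_k\}$ in $H$,
\[
\langle(A+\lambda^\ast)u,u\rangle=\sum_{k\ge1}\frac{\lambda_k+\lambda^\ast}{|\lambda_k-\lambda|^2}\,|\langle f,\phi_k\rangle|^2.
\]

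Because $\lambda_k+\lambda^\ast>0$ for all $k$ by \eqref{lb}, each term equals $\bigl|(\lambda_k+\lambda^\ast)/(\lambda_k-\lambda)\bigr|^2\cdot|\langle f,\phi_k\rangle|^2/(\lambda_k+\lambda^\ast)$, so
\[
\langle(A+\lambda^\ast)u,u\rangle\le\|((\lambda_k+\lambda^\ast)/(\lambda_k-\lambda))\|_{\ell^\infty}^2\sum_{k\ge1}\frac{|\langle f,\phi_k\rangle|^2}{\lambda_k+\lambda^\ast}.
\]
Now \eqref{rf} applied at the point $-\lambda^\ast$ identifies the last series with $\langle f,(A+\lambda^\ast)^{-1}f\rangle$, and a Cauchy--Schwarz estimate in the $V^\ast$--$V$ duality gives $\langle f,(A+\lambda^\ast)^{-1}f\rangle\le\|(A+\lambda^\ast)^{-1}f\|_V\,\|f\|_{V^\ast}\le\|(A+\lambda^\ast)^{-1}\|_{\mathcal{B}(V^\ast,V)}\|f\|_{V^\ast}^2$. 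Inserting this into the chain above and extracting a square root yields \eqref{re2}.

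The argument is essentially routine, and I do not expect a genuine obstacle; the only points requiring attention are the justification of the term-by-term evaluation of the $V$-convergent series \eqref{rf} under the duality pairing (legitimate by continuity of the pairing and the relation $\langle(A+\lambda^\ast)u,\phi_k\rangle=(\lambda_k+\lambda^\ast)(u,\phi_k)$), and the bookkeeping ensuring that $-\lambda^\ast\in\rho(A)$ and that the sequences $(1/(\lambda_k-\lambda))$ and $((\lambda_k+\lambda^\ast)/(\lambda_k-\lambda))$ are bounded, both of which follow at once from \eqref{lb} and $\lambda\in\rho(A)$.
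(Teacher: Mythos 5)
Your proof is correct and follows essentially the same route as the paper's: \eqref{re1} via \eqref{rf} and Parseval, and \eqref{re2} by combining the coercivity bound $\kappa\|u\|_V^2\le\langle(A+\lambda^\ast)u,u\rangle$ with the spectral expansion and the resolvent at $-\lambda^\ast\in\rho(A)$. The only (harmless) deviations are that you work directly with $f\in V^\ast$ instead of first taking $f\in H$ and concluding by density, and that you identify $\sum_{k\ge1}|\langle f,\phi_k\rangle|^2/(\lambda_k+\lambda^\ast)$ with the quadratic form $\langle f,(A+\lambda^\ast)^{-1}f\rangle$ (which is in fact the corrected version of the identity the paper writes as $\|(A+\lambda^\ast)^{-1}f\|_H^2$), so your constant comes out as $\kappa^{-1/2}\|(A+\lambda^\ast)^{-1}\|_{\mathcal{B}(V^\ast,V)}^{1/2}$ rather than the stated $\kappa^{-1/2}\|(A+\lambda^\ast)^{-1}\|_{\mathcal{B}(V^\ast,V)}$, an immaterial difference.
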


\begin{proof}
Since \eqref{re1} follows directly from \eqref{rf} and the Parseval formula, it is enough to prove \eqref{re2}. To this purpose we set $u:=(A-\lambda)^{-1} f$ and notice from the obvious identity $\Delta u = (q-\lambda) u - f \in V^\ast$ that $u \in W$. Therefore, by applying \eqref{ggf} with $v=u$, we infer from the coercivity estimate \eqref{co} that
\begin{equation}
\label{es7}
\kappa \|u\|_V^2\le \langle (A+\lambda^\ast) u , u \rangle_{V^\ast,V}.
\end{equation}
Let us assume for a while that $f \in H$. Then, with reference to \eqref{es8}, we have
$$ (A+\lambda^\ast) u = \sum_{k \ge 1} \frac{\lambda_k+\lambda^\ast}{\lambda_k-\lambda} (f,\phi_k) \phi_k, $$
where the series converges in $H$. It follows from this, \eqref{rf} and \eqref{es7} that
\begin{eqnarray}
\label{es9}
\kappa \|u\|_V^2 & \le & \sum_{k \ge 1} \frac{\lambda_k+\lambda^\ast}{|\lambda_k-\lambda|^2} |(f,\phi_k)|^2 \\
& \le & \| ( (\lambda_k+\lambda^\ast) / (\lambda_k-\lambda) ) \|_{\ell^\infty}^2
\sum_{k \ge 1} \frac{|(f,\phi_k)|^2}{\lambda_k+\lambda^\ast}. \nonumber
\end{eqnarray}
Further, taking into account that 
$$ \sum_{k \ge 1} \frac{|(f,\phi_k)|^2}{\lambda_k+\lambda^\ast} = \| (A+\lambda^\ast)^{-1} f \|_H^2,$$
according to \eqref{rf} and the Parseval formula, and then using that 
$$\| (A+\lambda^\ast)^{-1} f \|_H \le  \| (A+\lambda^\ast)^{-1}\|_{\mathcal{B}(V^\ast,V)} \| f \|_{V^\ast},$$ 
we infer from \eqref{es9} that
\begin{equation}
\label{es9b}  
\|u\|_V \leq \kappa^{-1/2}  \| (A+\lambda^\ast)^{-1}\|_{\mathcal{B}(V^\ast,V)} \| ( (\lambda_k+\lambda^\ast) / (\lambda_k-\lambda) ) \|_{\ell^\infty}\| f \|_{V^\ast}.
\end{equation}
Finally, keeping in mind that $u=(A-\lambda)^{-1} f$ and that $(A-\lambda)^{-1} \in \mathcal{B}(V^\ast,V)$, 
\eqref{re2} follows readily from \eqref{es9b} by density of $H$ in $V^\ast$.
\end{proof}

As a byproduct of Proposition \ref{proposition1}, we have the following:

\begin{corollary}\label{corollary1}
Let $q\in \mathrm{Q}(n/2,\aleph)$. Then, for all $\tau \in [1,+\infty)$ we have
\begin{equation}
\label{re4}
\|(A-(\tau+i)^2)^{-1}f\|_H\le (2\tau)^{-1}\|f\|_H,\ f \in H.
\end{equation}
Moreover, for all $\tau \ge \tau_\ast=:1+(\max(0,2-\lambda^\ast))^{1/2}$, we have
\begin{equation}
\label{re5}
\|(A-(\tau+i)^2)^{-1}f\|_V \le C (\tau+\lambda^\ast) \|f\|_{V^\ast},\ f \in V^{\ast},
\end{equation}
where $C$ is the same constant as in \eqref{re2}.
\end{corollary}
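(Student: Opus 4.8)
The plan is to derive both bounds from Proposition \ref{proposition1} by taking $\lambda=(\tau+i)^2$ and estimating the relevant $\ell^\infty$-norms of sequences indexed by the eigenvalues. Writing $(\tau+i)^2=\tau^2-1+2i\tau$, its imaginary part equals $2\tau\neq0$ for $\tau\ge1$, whereas $\sigma(A)\subset\mathbb{R}$; hence $(\tau+i)^2\in\rho(A)$ and Proposition \ref{proposition1} applies. Moreover, since every $\lambda_k$ is real, $|\lambda_k-(\tau+i)^2|\ge2\tau$, so that $\|(1/(\lambda_k-(\tau+i)^2))\|_{\ell^\infty}\le(2\tau)^{-1}$; plugging this into \eqref{re1} yields \eqref{re4} for all $\tau\ge1$.

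For \eqref{re5}, recall from \eqref{lb} that $\lambda_k+\lambda^\ast>0$, so by \eqref{re2} it suffices to show that $\|((\lambda_k+\lambda^\ast)/(\lambda_k-(\tau+i)^2))\|_{\ell^\infty}\le\tau+\lambda^\ast$ whenever $\tau\ge\tau_\ast$. I would first check, splitting on whether $\lambda^\ast\ge2$ or $\lambda^\ast<2$, that the condition $\tau\ge\tau_\ast$ entails both $\tau^2-1+\lambda^\ast\ge2$ and $\tau+\lambda^\ast\ge2$. Then, for a fixed $k$, I distinguish two cases. If $\lambda_k+\lambda^\ast\le2(\tau^2-1+\lambda^\ast)$, I bound the denominator below by $2\tau$ and the numerator above by $2(\tau^2-1+\lambda^\ast)\le2\tau^2+2\lambda^\ast$, so the ratio is at most $\tau+\lambda^\ast/\tau\le\tau+\lambda^\ast$ because $\tau\ge1$. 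If instead $\lambda_k+\lambda^\ast>2(\tau^2-1+\lambda^\ast)$, then $\lambda_k-\tau^2+1=(\lambda_k+\lambda^\ast)-(\tau^2-1+\lambda^\ast)>\tfrac12(\lambda_k+\lambda^\ast)>0$, hence $|\lambda_k-(\tau+i)^2|\ge\lambda_k-\tau^2+1>\tfrac12(\lambda_k+\lambda^\ast)$ and the ratio is $<2\le\tau+\lambda^\ast$. Taking the supremum over $k$ and inserting the resulting bound into \eqref{re2} gives \eqref{re5} with the same constant $C$ as there.

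All the estimates involved are elementary, and the only step requiring mild care is the case distinction in the proof of \eqref{re5}. Its role is that the explicit threshold $\tau_\ast$ is chosen precisely so that $\tau\ge\tau_\ast$ makes $\tau^2-1+\lambda^\ast$ positive (so the dichotomy is meaningful) and simultaneously makes $\tau+\lambda^\ast\ge2$, which allows the ``large $\lambda_k$'' regime to be absorbed into the factor $\tau+\lambda^\ast$ without an extra multiplicative constant, thus keeping $C$ identical to the one appearing in Proposition \ref{proposition1}.
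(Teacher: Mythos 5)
Your proposal is correct and follows essentially the same route as the paper: both parts are obtained from Proposition \ref{proposition1} by bounding $\|(1/(\lambda_k-(\tau+i)^2))\|_{\ell^\infty}\le(2\tau)^{-1}$ and $\|((\lambda_k+\lambda^\ast)/(\lambda_k-(\tau+i)^2))\|_{\ell^\infty}\le\tau+\lambda^\ast$. The only difference is cosmetic: where you establish the second supremum via a dichotomy on the size of $\lambda_k+\lambda^\ast$ relative to $2(\tau^2-1+\lambda^\ast)$, the paper minorizes $|\lambda_k-(\tau+i)^2|$ by $\tfrac12(|\lambda_k-(\tau^2-1)|+2\tau)$ and uses the monotonicity of $t\mapsto(t+\lambda^\ast)/(|t-(\tau^2-1)|+2\tau)$, the role of $\tau_\ast$ being the same in both arguments.
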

\begin{proof}
As \eqref{re4} is a straightforward consequence of \eqref{re1}, we shall only prove \eqref{re5}. To do that, we refer to \eqref{re2} and notice that
\begin{equation}
\label{es10} 
\frac{\lambda_k+\lambda^\ast}{|\lambda_k-(\tau+i)^2|}= \frac{\lambda_k+\lambda^\ast}{\left( (\lambda_k-(\tau^2-1))^2+4\tau^2 \right)^{1/2}}
\leq 2 \Theta(\lambda_k),\ k \geq 1,
\end{equation}
where we have set 
$\Theta(t):=(t+\lambda^\ast)/ (|t-(\tau^2-1)|+ 2\tau)$ for all $t \in [-\lambda^\ast,\infty)$.
Further, taking into account that $\Theta$ is a decreasing function on $[\tau^2-1,\infty)$, provided that $\tau \geq \tau_\ast$, we easily get that
$$ \sup_{t \in [-\lambda^\ast,+\infty)} \Theta(t) \le \frac{\tau^2-1+\lambda^\ast}{2 \tau} \le \frac{\tau+\lambda^\ast}{2}, $$
which along with \eqref{re2} and \eqref{es10}, yields \eqref{re5}.
\end{proof}

\begin{proposition}\label{proposition2}
Let $q \in \mathrm{Q}(n/2,\aleph)$. Then, there exists a constant $C>0$, depending only on 
$n$, $\Omega$, $\mathfrak{c}$ and $\aleph$, such that for all $\sigma \in [0,1]$ and all $f \in L^{p_\sigma}(\Omega)$, we have
\begin{equation}\label{re7}
\|(A-(\tau+i)^2)^{-1}f\|_{L^{p_\sigma^\ast}(\Omega)}\le C\tau ^{-1+2\sigma}\|f\|_{L^{p_\sigma}(\Omega)},\ \tau \in [\tau_\ast,\infty),
\end{equation}
where $p_\sigma:=2n / (n+2 \sigma)$ and $p_\sigma^\ast:=2n / (n-2 \sigma)$ is the conjugate integer to $p_\sigma$.
\end{proposition}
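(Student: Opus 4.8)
The plan is to interpolate between the two endpoint resolvent estimates already available: the $L^2 \to L^2$ bound \eqref{re4} (which corresponds to $\sigma = 0$, since $p_0 = 2 = p_0^\ast$) and the $V^\ast \to V$ bound \eqref{re5}, which via the Sobolev embeddings $V \hookrightarrow L^{p^\ast}(\Omega)$ and $L^p(\Omega) \hookrightarrow V^\ast$ gives an $L^p \to L^{p^\ast}$ estimate corresponding to $\sigma = 1$. The exponent gain in \eqref{re7}, namely $\tau^{-1+2\sigma}$, is precisely the geometric interpolant between $\tau^{-1}$ (at $\sigma=0$) and $\tau^{+1}$ (at $\sigma=1$), which is the tell-tale sign that complex interpolation is the right tool.

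First I would fix $\tau \ge \tau_\ast$ and consider the operator $R_\tau := (A-(\tau+i)^2)^{-1}$. From \eqref{re4} I have $\|R_\tau\|_{\mathcal B(L^2,L^2)} \le (2\tau)^{-1} \le C\tau^{-1}$. From \eqref{re5}, together with the continuity of the embeddings $L^p(\Omega) \hookrightarrow V^\ast$ and $V \hookrightarrow L^{p^\ast}(\Omega)$ (recall $p = 2n/(n+2)$, $p^\ast = 2n/(n-2)$), I get $\|R_\tau\|_{\mathcal B(L^p, L^{p^\ast})} \le C(\tau + \lambda^\ast) \le C\tau$, absorbing $\lambda^\ast$ into the constant since $\tau \ge \tau_\ast \ge 1$ and $\lambda^\ast$ depends only on the allowed quantities. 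Next I would invoke the Stein–Riesz–Thorin interpolation theorem: regarding $R_\tau$ as acting on the complex interpolation scale, for $\theta \in [0,1]$ the space $[L^2, L^p]_\theta = L^{p_\theta}$ where $1/p_\theta = (1-\theta)/2 + \theta/p = (1-\theta)/2 + \theta(n+2)/(2n)$, which simplifies to $p_\theta = 2n/(n+2\theta)$, and likewise $[L^2, L^{p^\ast}]_\theta = L^{p_\theta^\ast}$ with $p_\theta^\ast = 2n/(n-2\theta)$. Writing $\sigma := \theta$, interpolation yields
\[
\|R_\tau\|_{\mathcal B(L^{p_\sigma}, L^{p_\sigma^\ast})} \le \big(C\tau^{-1}\big)^{1-\sigma}\big(C\tau\big)^{\sigma} = C\,\tau^{-(1-\sigma)+\sigma} = C\,\tau^{-1+2\sigma},
\]
which is exactly \eqref{re7}. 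I would check that the constant $C$ obtained this way depends only on $n$, $\Omega$, $\mathfrak c$ and $\aleph$, as the interpolation constant in Riesz–Thorin is $1$ and all the input constants enjoy this dependence (through Proposition \ref{proposition1}, Corollary \ref{corollary1}, and the Sobolev constant $c_\Omega$).

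The main technical point to be careful about is the legitimacy of the interpolation step: one must verify that $R_\tau$ is a well-defined linear operator on the sum space $L^2(\Omega) + L^p(\Omega)$ in a consistent way, i.e. that the two realizations agree on the intersection. This holds because $R_\tau$ is defined intrinsically via the resolvent of $A$ on $V^\ast$ (or on $H$), and the two bounded extensions coincide with this fixed map on the dense common domain $L^2 \cap L^p$, so the abstract Stein interpolation theorem for a single operator applies. A secondary nuisance is to confirm that $p_\sigma^\ast$ is genuinely the conjugate exponent (in the dual-pairing sense intended by the authors) of $p_\sigma$ — a direct computation gives $1/p_\sigma + 1/p_\sigma^\ast = (n+2\sigma)/(2n) + (n-2\sigma)/(2n) = 1$, confirming this — and that $p_\sigma \ge 1$ and $p_\sigma^\ast \le \infty$ throughout $\sigma \in [0,1]$, which is immediate since $n \ge 3$. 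I do not anticipate any serious obstacle beyond bookkeeping; the content of the proposition is really the combination of the two endpoint estimates with interpolation.
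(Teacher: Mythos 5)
Your proposal is correct and follows essentially the same route as the paper: the authors likewise take the $L^{p_0}\to L^{p_0^\ast}$ bound \eqref{re4} and the $L^{p_1}\to L^{p_1^\ast}$ bound obtained from \eqref{re5} via the Sobolev embedding, and then apply the Riesz--Thorin theorem using $(1-\sigma)/p_0+\sigma/p_1=1/p_\sigma$ to arrive at the $\tau^{-1+2\sigma}$ rate. No discrepancies to report.
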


\begin{proof}
In light of \eqref{re5} and the identity $p_1=p$, we have for all $f \in L^{p_1}(\Omega)$,
\begin{equation}
\label{rt1}
\| (A-(\tau+i)^2)^{-1} f \|_{L^{p_1^\ast}(\Omega)} \le C \tau \| f \|_{L^{p_1}(\Omega)},\ \tau \in [\tau_\ast,\infty),
\end{equation}
by the Sobolev embedding theorem, where $C$ is a positive constant depending only on 
$n$, $\Omega$, $\mathfrak{c}$ and $\aleph$. Further, bearing in mind that $H=L^{p_0}(\Omega)$ and that $p_0^\ast=p_0=2$, we
rewrite \eqref{re4} as
\begin{equation}
\label{rt2}
\| (A-(\tau+i)^2)^{-1} f \|_{L^{p_0^\ast}(\Omega)} \le (2 \tau)^{-1} \| f \|_{L^{p_0}(\Omega)},\ \tau \in [1,\infty),
\end{equation}
whenever $f \in L^{p_0}(\Omega)$. Therefore, since $(1-\sigma) / p_0 + \sigma / p_1 = 1 / p_\sigma$ for all $\sigma \in [0,1]$, we deduce from
\eqref{rt1}-\eqref{rt2} upon interpolating between $L^{p_0}(\Omega)$ and $L^{p_1}(\Omega)$ with the aid of the Riesz-Thorin theorem (see, e.g. \cite[Theorem IX.17]{RS2}), that
\begin{eqnarray*}
\| (A-(\tau+i)^2)^{-1} f \|_{L^{p_\sigma^\ast}(\Omega)} & \le & (C \tau)^\sigma (2\tau^{-1})^{1-\sigma} \| f \|_{L^{p_\sigma}(\Omega)} \\
& \le & 2(1+C) \tau^{-1+2\sigma} \| f \|_{L^{p_\sigma}(\Omega)},
\end{eqnarray*}
whenever $\tau \in [\tau_\ast,\infty)$. Finally, we obtain \eqref{re7} from this by renaming the constant $2(1+C)$ as $C$ in the above line.  \end{proof}

\subsection{Asymptotic spectral analysis}

Set $\mathfrak{H}:=H^2(\Omega)$ if $n\ne 4$ and put $\mathfrak{H}:=H^{2+\epsilon}(\Omega)$ for some arbitrary $\epsilon >0$, if $n=4$. We notice that $\mathfrak{H} \subset L^\infty(\Omega)$ and that the embedding is continuous, provided that $n=3$ or $n=4$, while $\mathfrak{H}$ is continuously embedded in 
$L^{2n/(n-4)}(\Omega)$ when $n>4$. The main purpose for bringing $\mathfrak{H}$ into the analysis here is the following useful property: $fu\in H$
whenever $f\in L^{\max(2,n/2)}(\Omega)$ and $u\in \mathfrak{H}$.

Next we introduce the subspace
\[
\mathfrak{h}:=\{ g=\partial_\nu G +\alpha G_{|\Gamma};\; G\in \mathfrak{H}\}
\]
of $L^2(\Gamma)$, equipped with its natural quotient norm
\[
\|g\|_{\mathfrak{h}}:=\min\{ \|G\|_{\mathfrak{H}};\; G\in \dot{g}\},\quad g\in \mathfrak{h},
\]
where
\[
\dot{g}:=\{ G\in \mathfrak{H};\; \partial_\nu G+\alpha G_{|\Gamma}=g\},\quad g\in \mathfrak{h},
\]
and we consider the non homogenous BVP:
\begin{equation}\label{bvp1}
(-\Delta +q-\lambda )u=0\; \mathrm{in}\; \Omega ,\quad \partial_\nu u+\alpha u_{|\Gamma} =g\; \mathrm{on}\; \Gamma .
\end{equation}

We first examine the well-posedness of \eqref{bvp1}.

\begin{lemma}\label{lemma2}
Let $\lambda\in \rho(A)$ and let $g\in \mathfrak{h}$. Then, the function
\begin{equation}\label{sol1}
u_\lambda (g):=(A-\lambda )^{-1}(\Delta -q+\lambda)G+G
\end{equation}
is independent of $G\in \dot{g}$. Moreover, $u_\lambda (g)\in  K\cap W$ is the unique solution to \eqref{bvp1} and is expressed as
\begin{equation}\label{rep1}
u_\lambda (g)=\sum_{k\ge 1}\frac{(g,\psi_k)}{\lambda_k-\lambda}\phi_k
\end{equation} 
in $H$. Here $(\cdot,\cdot)$ stands for the usual scalar product in $L^2(\Gamma)$, not to be mistaken with the scalar 
product in $H$ which is denoted by the same symbol.
\end{lemma}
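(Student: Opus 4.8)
The plan is to verify in turn: (a) well-definedness of $u_\lambda(g)$, (b) that it solves the BVP \eqref{bvp1} and lies in $K\cap W$, (c) uniqueness, and (d) the spectral representation \eqref{rep1}. For (a), if $G_1,G_2\in\dot g$ then $G:=G_1-G_2\in\mathfrak H$ satisfies $\partial_\nu G+\alpha G_{|\Gamma}=0$, so $G$ is (formally) an element of $W$ solving the homogeneous Robin problem with spectral parameter, but more directly: the difference of the two candidate expressions is $(A-\lambda)^{-1}(\Delta-q+\lambda)G+G$, and one shows $(\Delta-q+\lambda)G = -(A-\lambda)(\text{something})$; concretely, since $G\in\mathfrak H\subset V$ has $\Delta G\in H$ (as $n\neq 4$ gives $G\in H^2$; the $n=4$ case uses $H^{2+\epsilon}$) and $qG\in H$ by the stated mapping property of $\mathfrak H$, we have $(\Delta-q+\lambda)G\in H$ and one can identify, via the generalized Green formula \eqref{ggf} and the vanishing Robin data, that $\langle(A-\lambda)G,v\rangle = \mathfrak a(G,v)+\lambda\cdot(-1)\,(G,v)$... more cleanly: $(A-\lambda)G$ as an element of $V^\ast$ equals $-(\Delta-q+\lambda)G$ because $\langle(A-\lambda)G,v\rangle = \mathfrak a(G,v) - \lambda(G,v) = (\nabla G,\nabla v)+(qG,v)+\langle\alpha G_{|\Gamma},v_{|\Gamma}\rangle - \lambda(G,v)$, and applying \eqref{ggf} to rewrite $(\nabla G,\nabla v) = \langle\partial_\nu G,v_{|\Gamma}\rangle - \langle\Delta G,v\rangle$ together with $\partial_\nu G+\alpha G_{|\Gamma}=0$ collapses the boundary terms, leaving $\langle(A-\lambda)G,v\rangle = -\langle(\Delta-q+\lambda)G,v\rangle$. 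Hence $(A-\lambda)^{-1}(\Delta-q+\lambda)G = -G$, so the difference of the two expressions for $u_\lambda(g)$ built from $G_1$ and $G_2$ vanishes, giving (a).

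For (b), pick any $G\in\dot g$ and set $w:=(A-\lambda)^{-1}(\Delta-q+\lambda)G$, so $u_\lambda(g)=w+G$. Since $(\Delta-q+\lambda)G\in H$ (same mapping property as above), Lemma \ref{lemma1} gives $w\in K$, hence $u_\lambda(g)=w+G\in K$ as $G\in\mathfrak H\subset K$ (indeed $AG\in H$ by the same computation identifying $AG=-\Delta G+qG+$... wait, rather $AG\in V^\ast$ but in fact $\langle AG,v\rangle = -\langle(\Delta-q)G,v\rangle+\langle\alpha G_{|\Gamma},v_{|\Gamma}\rangle - $ boundary corrections, and using $\partial_\nu G+\alpha G_{|\Gamma}=g\in L^2(\Gamma)$ one sees $AG\in H$ only after accounting for the surface term; this is the one place care is needed — I would argue $u_\lambda(g)\in K$ directly from $w\in K$ and then get the boundary condition separately). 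Then $u:=u_\lambda(g)$ satisfies $(A-\lambda)u = (\Delta-q+\lambda)G + (A-\lambda)G$ in $V^\ast$, and by the identity $(A-\lambda)G = -(\Delta-q+\lambda)G + \langle g,\cdot_{|\Gamma}\rangle$ (the surface term surviving now because the Robin data is $g$, not $0$), the bulk terms cancel and $(A-\lambda)u$ equals the functional $v\mapsto\langle g,v_{|\Gamma}\rangle$, i.e. $\mathfrak a(u,v)-\lambda(u,v)=\langle g,v_{|\Gamma}\rangle$ for all $v\in V$. By the variational-to-BVP correspondence established in Section \ref{sec-BSD} (with $f$ there the functional $v\mapsto\langle g,v_{|\Gamma}\rangle\in V^\ast$), this means $u\in W$ and $(-\Delta+q-\lambda)u=0$ in $\Omega$ with $\partial_\nu u+\alpha u_{|\Gamma}=g$ on $\Gamma$. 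Uniqueness in (c) is immediate: the difference of two solutions satisfies \eqref{vf} with $f=0$ and $\mu=-\lambda$, hence $\mathfrak a(z,v)=\lambda(z,v)$ for all $v\in V$, so $z\in\ker(A-\lambda)=\{0\}$ since $\lambda\in\rho(A)$.

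For (d), I would compute the Fourier coefficients $(u_\lambda(g),\phi_k)$ in $H$. From $(A-\lambda)u=F$ where $F\in V^\ast$ is the functional $v\mapsto\langle g,v_{|\Gamma}\rangle$, one has $(u,\phi_k) = \langle F,\phi_k\rangle/(\lambda_k-\lambda) = \langle g,\psi_k\rangle/(\lambda_k-\lambda) = (g,\psi_k)/(\lambda_k-\lambda)$, using $\psi_k=\phi_k{}_{|\Gamma}$ and that $g\in L^2(\Gamma)$ so the duality pairing is the $L^2(\Gamma)$ scalar product; alternatively, plug $v=\phi_k$ into $\mathfrak a(u,v)-\lambda(u,v)=\langle g,v_{|\Gamma}\rangle$ and use $\mathfrak a(u,\phi_k)=\mathfrak a(\phi_k,u)=\lambda_k(\phi_k,u)$ (self-adjointness of $\mathfrak a$, real eigenvalues) to get $(\lambda_k-\lambda)(u,\phi_k)=(g,\psi_k)$. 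Summing $\sum_k (u,\phi_k)\phi_k = u$ in $H$ (Parseval, since $u\in H$ and $\{\phi_k\}$ is an orthonormal basis) yields \eqref{rep1}; the series converges in $H$ because $(1/(\lambda_k-\lambda))$ is bounded and $(g,\psi_k) = (\lambda_k-\lambda)(u,\phi_k)$ is square-summable. The main obstacle is really step (b) — pinning down that $u_\lambda(g)$ genuinely lies in $K$ and that the surface term $\langle g,v_{|\Gamma}\rangle$ emerges correctly from $(A-\lambda)G$ via the generalized Green formula \eqref{ggf}, keeping track of the regularity of $G$ (the $n=4$ subtlety) so that $(\Delta-q+\lambda)G\in H$ and Lemma \ref{lemma1} applies; everything else is bookkeeping with the variational formulation already set up in the introduction.
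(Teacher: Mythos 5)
Your argument is correct and follows essentially the same route as the paper: well-definedness via the identity $(A-\lambda)^{-1}(\Delta-q+\lambda)(G_1-G_2)=-(G_1-G_2)$ for a difference with vanishing Robin data, uniqueness from $\lambda\in\rho(A)$, and the coefficient identity $(\lambda_k-\lambda)(u_\lambda(g),\phi_k)=(g,\psi_k)$ obtained by testing against $\phi_k$. The only cosmetic difference is in the last step: the paper applies the generalized Green formula \eqref{ggf} twice (once with $u=u_\lambda(g)$, $v=\phi_k$, once with the roles exchanged) and subtracts, whereas you first record the variational identity $\mathfrak{a}(u,v)-\lambda(u,v)=\langle g,v_{|\Gamma}\rangle$ and then invoke the Hermitian symmetry of $\mathfrak{a}$; these are the same computation in different packaging, and your passage from the variational identity to the BVP is sound.

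The one point you rightly flag as delicate --- membership in $K$ --- is also the one point your proposal does not settle, and the fix you sketch does not work: writing $u_\lambda(g)=w+G$ with $w\in K$ does not give $u_\lambda(g)\in K$, because $G\notin K$ when $g\neq 0$. Indeed $\langle AG,v\rangle=(-\Delta G+qG,v)+\langle g,v_{|\Gamma}\rangle$ contains a surface term that is not represented by any function in $H$, and the same surface term survives in $Au_\lambda(g)=\lambda u_\lambda(g)+\langle g,\cdot_{|\Gamma}\rangle$. You should not be troubled by failing to close this: the paper's own proof only verifies $u_\lambda(g)\in W$ and is silent on $K$, and the $K$-membership asserted in the statement appears to be incorrect as written for $g\neq 0$. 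Only the $W$-membership (and, in Lemma \ref{lemma4}, the $K$-membership of objects of the form $(A-\lambda)^{-1}f$ with $f\in H$) is actually used downstream, so nothing in the rest of your argument, or of the paper, depends on it.
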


\begin{proof}
Since $G\in \mathfrak{H}$, it is clear that $(\Delta -q+\lambda)G\in H$. Thus, the right hand side of \eqref{sol1} lies in $W$ and it is obviously a solution to  the BVP \eqref{bvp1}. Moreover, $\lambda$ being taken in the resolvent set of $A$, this solution is unique.

Further, for all $G_1$ and $G_2$ in $\dot{g}$, it is easy to check that $\partial_\nu (G_1-G_2) + \alpha (G_1-G_2)=0$ on $\Gamma$ and that $(A-\lambda )^{-1}(\Delta -q+\lambda)(G_1-G_2)=-(G_1-G_2)$ in $\Omega$. Therefore,
the function $u_\lambda (g)$ given by \eqref{sol1}, is independent of $G\in \dot{g}$. 

We turn now to showing \eqref{rep1}. To do that we apply the generalized Green formula \eqref{ggf} with $u=u_\lambda(g)$ and $v=\phi_k$, $k \geq 1$. We obtain
\[
\langle \Delta u_\lambda(g) , \phi_k\rangle+(\nabla u_\lambda(g)|\nabla \phi_k)=\langle \partial_\nu  u_\lambda(g) , \psi_k\rangle,
\]
which may be equivalently rewritten as
\begin{equation}\label{2.1}
((q-\lambda) u_\lambda(g),\phi_k)+(\nabla u_\lambda(g),\nabla \phi_k)=\langle g-\alpha u_\lambda(g)_{|\Gamma} ,\psi_k\rangle.
\end{equation}
Doing the same with $u=\phi_k$ and $v=u_\lambda(g)$, and taking the conjugate of both sides of the obtained equality, we find that
$$( u_\lambda(g), (q-\lambda_k) \phi_k)+( \nabla u_\lambda(g), \nabla \phi_k)=-\langle   u_\lambda(g)_{|\Gamma} , \alpha \psi_k \rangle.
$$
Bearing in mind that $q$ and $\alpha$ are real-valued, and that $\lambda_k \in \mathbb{R}$, this entails that
\begin{equation}\label{2.2}
((q-\lambda_k)u_\lambda(g),\phi_k)+(\nabla u_\lambda(g),\nabla \phi_k)=-\langle \alpha u_\lambda(g)_{|\Gamma} , \psi_k\rangle.
\end{equation}
Now, taking the difference of \eqref{2.1} with \eqref{2.2}, we end up getting that
\[
(\lambda_k-\lambda )(u_\lambda(g),\phi_k)=\langle g , \psi_k \rangle=(g,\psi_k).
\]
This and the basic identity
\[
u_\lambda(g)=\sum_{k\ge 1}(u,\phi_k)\phi_k
\]
yield \eqref{rep1}.
\end{proof}

The series on the right hand side of \eqref{rep1} converges only in $H$ and thus we cannot deduce an expression of the trace $u_\lambda(g)_{| \Gamma}$ in terms of $\lambda_k$ and $\psi_k$, $k \geq 1$, directly from \eqref{rep1}. To circumvent this difficulty we 
establish the following lemma:

\begin{lemma}\label{lemma4}
Let $g\in \mathfrak{h}$. Then, for all $\lambda$ and $\mu$ in $\rho(A)$, we have
\begin{equation}\label{s2}
u_\lambda(g){_{|\Gamma}} -u_\mu(g){_{|\Gamma}}=(\lambda-\mu)\sum_{k\ge 1}\frac{(g,\psi_k)}{(\lambda_k-\lambda)(\lambda_k-\mu)}\psi_k,
\end{equation}
and the series converges in $H^{1/2}(\Gamma)$.
\end{lemma}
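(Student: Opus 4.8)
The plan is to derive \eqref{s2} by taking traces in the series representation \eqref{rep1} of Lemma \ref{lemma2}, the whole difficulty being that the convergence there must first be upgraded from $H$ to a space on which the trace operator is continuous. Applying \eqref{rep1} with $\lambda$ and with $\mu$ and using $(\lambda_k-\lambda)^{-1}-(\lambda_k-\mu)^{-1}=(\lambda-\mu)(\lambda_k-\lambda)^{-1}(\lambda_k-\mu)^{-1}$ gives
\[
u_\lambda(g)-u_\mu(g)=(\lambda-\mu)\sum_{k\ge 1}\frac{(g,\psi_k)}{(\lambda_k-\lambda)(\lambda_k-\mu)}\phi_k ,
\]
with convergence, a priori, only in $H$; in particular one cannot take traces termwise yet, since $v\mapsto v_{|\Gamma}$ is not continuous on $H$.

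The key point is that the identity $(\lambda_k-\mu)(u_\mu(g),\phi_k)=(g,\psi_k)$ obtained within the proof of Lemma \ref{lemma2} rewrites the series above as $(\lambda-\mu)\sum_{k\ge 1}(\lambda_k-\lambda)^{-1}(u_\mu(g),\phi_k)\phi_k$, that is, in view of \eqref{rf} and because $u_\mu(g)\in V\hookrightarrow V^\ast$, as $(\lambda-\mu)(A-\lambda)^{-1}u_\mu(g)$. Since the resolvent series in \eqref{rf} converges in $V$, the series above converges in $V$ as well, and its $V$-sum is necessarily $u_\lambda(g)-u_\mu(g)$ by uniqueness of the limit in $H$. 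It then remains to apply the trace operator, which is bounded from $V$ into $H^{1/2}(\Gamma)$ — indeed of norm at most $1$, by the very definition of the norm on $H^{1/2}(\Gamma)$ — termwise to this $V$-convergent series, which yields \eqref{s2} together with the claimed convergence in $H^{1/2}(\Gamma)$.

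I expect the genuinely non-formal point to be exactly this passage from $H$- to $V$-convergence. A naive estimate fails because the traces $\psi_k$ do not form an orthonormal basis of $L^2(\Gamma)$, so $((g,\psi_k))_k$ need not lie in $\ell^2$; what one really needs is the finiteness of the weighted sum $\sum_{k\ge 1}(\lambda_k+\lambda^\ast)\,|(g,\psi_k)|^2\,|(\lambda_k-\lambda)(\lambda_k-\mu)|^{-2}$. This is obtained by invoking the same identity of Lemma \ref{lemma2} at the admissible value $\lambda=-\lambda^\ast\in\rho(A)$ (legitimate by \eqref{lb}), which gives $(g,\psi_k)=(\lambda_k+\lambda^\ast)(u_{-\lambda^\ast}(g),\phi_k)$ and hence, by the Parseval formula, $\sum_{k\ge 1}(\lambda_k+\lambda^\ast)^{-2}|(g,\psi_k)|^2=\|u_{-\lambda^\ast}(g)\|_H^2<\infty$; combined with $\lambda_k\to+\infty$ (so that $(\lambda_k+\lambda^\ast)^3\,|(\lambda_k-\lambda)(\lambda_k-\mu)|^{-2}$ is bounded uniformly in $k$, $\lambda$ and $\mu$ being fixed), it yields the required summability. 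Equivalently — and this is the route I would actually write down — this is precisely the content that \eqref{rf} encodes once the series is identified with $(A-\lambda)^{-1}u_\mu(g)$, so no separate estimate is needed beyond that identification. This completes the plan.
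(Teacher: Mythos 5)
Your proposal is correct and follows essentially the same route as the paper: both identify the series with $(\lambda-\mu)(A-\lambda)^{-1}u_\mu(g)$ via the coefficient identity $(u_\mu(g),\phi_k)=(g,\psi_k)/(\lambda_k-\mu)$, invoke the $V$-convergence of the resolvent expansion \eqref{rf} to upgrade the $H$-convergence, and then apply the (norm-one) trace operator into $H^{1/2}(\Gamma)$. The paper merely reaches the same identification slightly differently, by first observing that $u_\lambda-u_\mu$ solves the Robin problem with source $(\lambda-\mu)u_\mu$ and then using \eqref{l1.1}, so no substantive difference.
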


\begin{proof} 
Notice that
\[
(-\Delta +q-\lambda )(u_\lambda -u_\mu)=(\lambda-\mu)u_\mu
\]
in $\Omega$ and that $\partial_\nu(u_\lambda -u_\mu)+\alpha(u_\lambda -u_\mu)_{|\Gamma}=0$ on $\Gamma$, where, for shortness sake, we write $u_\lambda =u_\lambda (g)$ and $u_\mu=u_\mu(g)$.
Thus, we have
\[
u_\lambda -u_\mu=(\lambda-\mu)(A-\lambda)^{-1}u_\mu =(\lambda-\mu)\sum_{k\ge 1}\frac{(u_\mu,\phi_k)}{\lambda_k-\lambda}\phi_k.
\]
On the other hand, since 
\[
(u_\mu,\phi_k)=\frac{(g,\psi_k)}{\lambda_k-\mu},\ k \geq 1,
\]
from \eqref{rep1}, we obtain that
\begin{equation}\label{s1}
u_\lambda -u_\mu=(\lambda-\mu)\sum_{k\ge 1}\frac{(g,\psi_k)}{(\lambda_k-\lambda)(\lambda_k-\mu)}\phi_k,
\end{equation}
where the series converges in $K$. As a consequence we have
\begin{align*}
 (A-\lambda)\sum_{k\ge 1}\frac{(g,\psi_k)}{(\lambda_k-\lambda)(\lambda_k-\mu)}\phi_k= & \sum_{k\ge 1}\frac{(g,\psi_k)}{(\lambda_k-\lambda)(\lambda_k-\mu)}(A-\lambda)\phi_k \\
& =\sum_{k\ge 1}\frac{(g,\psi_k)}{\lambda_k-\mu}\phi_k,
\end{align*}
the series being convergent in $H$,  whence 
\[
\sum_{k\ge 1}\frac{(g,\psi_k)}{(\lambda_k-\lambda)(\lambda_k-\mu)}\phi_k=(A-\lambda)^{-1}\sum_{k\ge 1}\frac{(g,\psi_k)}{\lambda_k-\mu}\phi_k,
\] 
according to \eqref{l1.1}.

It follows from this and \eqref{s1} that
\[
u_\lambda -u_\mu=(\lambda-\mu) (A-\lambda)^{-1}\sum_{k\ge 1}\frac{(g,\psi_k)}{\lambda_k-\mu}\phi_k,
\]
where the series on the right hand side of \eqref{s1} converges in $V$. As a consequence we have
\begin{equation}
u_\lambda{_{|\Gamma}} -u_\mu{_{|\Gamma}}=(\lambda-\mu)\sum_{k\ge 1}\frac{(g,\psi_k)}{(\lambda_k-\lambda)(\lambda_k-\mu)}\psi_k,
\end{equation}
the series being convergent in $H^{1/2}(\Gamma)$.
\end{proof}

Next, we establish the following {\it a priori} estimate for the solution to \eqref{bvp1}.

\begin{lemma}\label{lemma3}
Let $q\in \mathrm{Q}(n/2,\aleph)$. Then, there exist two constants $\lambda_+\ge \lambda^\ast$ and $C>0$, depending only on $n$, $\Omega$, $\aleph$ and $\mathfrak{c}$, such that for all $\lambda \in (-\infty,-\lambda_+]$ and all $g\in \mathfrak{h}$, the solution $u_\lambda (g)$ to \eqref{bvp1} satisfies the estimate
\begin{equation}\label{lim1}
|\lambda|^{1/2} \|u_\lambda(g)\|_H+\|u_\lambda (g)\|_V\le C\|g\|_{L^2(\Gamma)}.
\end{equation}
\end{lemma}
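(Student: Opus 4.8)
The plan is to avoid the spectral series \eqref{rep1} altogether and instead extract \eqref{lim1} from the variational formulation of \eqref{bvp1} combined with the coercivity estimate \eqref{co}. The point is that once $u_\lambda(g)$ is seen to solve a clean variational identity, testing that identity against $u_\lambda(g)$ itself and invoking \eqref{co} produces both the $V$-bound and the $|\lambda|^{1/2}$-weighted $H$-bound simultaneously.

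First I would record, using Lemma \ref{lemma2}, that $u:=u_\lambda(g)$ belongs to $K\cap W$ and satisfies $-\Delta u+qu-\lambda u=0$ in $\Omega$ together with $\partial_\nu u+\alpha u_{|\Gamma}=g$ on $\Gamma$. Applying the generalized Green formula \eqref{ggf} with this $u$ and an arbitrary $v\in V$, and substituting $\Delta u=(q-\lambda)u\in V^\ast$ and $\partial_\nu u=g-\alpha u_{|\Gamma}$, I obtain the identity
\[
\mathfrak{a}(u,v)-\lambda(u,v)=(g,v_{|\Gamma}),\quad v\in V,
\]
where the right-hand side is the scalar product in $L^2(\Gamma)$, which is legitimate since $g\in\mathfrak{h}\subset L^2(\Gamma)$ and $v_{|\Gamma}\in L^2(\Gamma)$ by continuity of the trace operator. (Alternatively, this identity is already contained in \eqref{2.1} for $v=\phi_k$ and extends to all $v\in V$ by the eigenfunction expansion converging in $V$.)

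Next I would test this identity with $v=u$. Since $\lambda\le-\lambda^\ast<0$, I write, using \eqref{co},
\[
\mathfrak{a}(u,u)-\lambda\|u\|_H^2=\big(\mathfrak{a}(u,u)+\lambda^\ast\|u\|_H^2\big)+(|\lambda|-\lambda^\ast)\|u\|_H^2\ge\kappa\|u\|_V^2+(|\lambda|-\lambda^\ast)\|u\|_H^2,
\]
so that choosing $\lambda_+:=2\lambda^\ast$ (which is $\ge\lambda^\ast$ and depends only on $n$, $\Omega$, $\mathfrak{c}$, $\aleph$) guarantees $|\lambda|-\lambda^\ast\ge|\lambda|/2$ whenever $\lambda\le-\lambda_+$. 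Bounding the right-hand side of the variational identity by Cauchy–Schwarz and the trace estimate, $|(g,u_{|\Gamma})|\le\mathfrak{n}\|g\|_{L^2(\Gamma)}\|u\|_V$, I arrive at
\[
\kappa\|u\|_V^2+\frac{|\lambda|}{2}\|u\|_H^2\le\mathfrak{n}\|g\|_{L^2(\Gamma)}\|u\|_V.
\]
Keeping only the first term on the left gives $\|u\|_V\le(\mathfrak{n}/\kappa)\|g\|_{L^2(\Gamma)}$; feeding this back into the second term gives $|\lambda|\,\|u\|_H^2\le(2\mathfrak{n}^2/\kappa)\|g\|_{L^2(\Gamma)}^2$. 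Adding the two bounds yields \eqref{lim1} with $C=\sqrt{2}\,\mathfrak{n}\kappa^{-1/2}+\mathfrak{n}\kappa^{-1}$, a constant depending only on $\Omega$ and $\mathfrak{c}$ (through $\mathfrak{n}$ and $\kappa=(1-\mathfrak{c}\mathfrak{n}^2)/2$).

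I do not expect a genuine obstacle here: the heart of the argument is a one-line energy estimate. The only points requiring care are the first step — checking that $u_\lambda(g)$ is regular enough for \eqref{ggf} to apply, which is precisely the assertion $u_\lambda(g)\in K\cap W$ of Lemma \ref{lemma2}, and that the boundary duality pairing is the honest $L^2(\Gamma)$ inner product — and the bookkeeping needed to pick $\lambda_+$ large enough to absorb the constant $\lambda^\ast$ produced by \eqref{co} while keeping $C$ independent of $\lambda$ and of $q\in\mathrm{Q}(n/2,\aleph)$.
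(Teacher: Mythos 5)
Your proof is correct and follows essentially the same route as the paper: both test the equation against $u_\lambda(g)$ itself via the generalized Green formula \eqref{ggf} and absorb the potential and boundary terms to reach $\kappa\|u\|_V^2+c|\lambda|\,\|u\|_H^2\le \mathfrak{n}\|g\|_{L^2(\Gamma)}\|u\|_V$ (the paper invokes \eqref{ii1} with $\epsilon=\kappa$ where you cite \eqref{co}, which is the same absorption). The only cosmetic difference is your explicit threshold $\lambda_+=2\lambda^\ast$ versus the paper's $|\lambda|\ge(1+C_\kappa)/(1-\kappa/4)$.
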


\begin{proof}
Fix $\lambda \in \rho(A)\cap (-\infty ,0)$. We apply the generalized Green formula \eqref{ggf}
with $u=v:=u_\lambda$, where we write $u_\lambda$ instead of $u_\lambda(g)$. We get that
\begin{equation}\label{ae1}
|\lambda| \|u_\lambda\|_H^2+\|\nabla u_\lambda\|_H^2 \leq \|qu_\lambda ^2\|_{L^1(\Omega)}-(\alpha u_\lambda,u_\lambda)+(g,u_\lambda).
\end{equation}
Next, $\epsilon$ being fixed in $(0,+\infty)$, we combine \eqref{ii1} with \eqref{ae1} and obtain
\begin{equation}\label{ae2}
|\lambda |\|u_\lambda\|_H^2+\|\nabla u_\lambda\|_H^2\le \epsilon\|u_\lambda\|_V^2+C_\epsilon \|u_\lambda\|_H^2+\mathfrak{c}\mathfrak{n} ^2\|u_\lambda\|_V^2+\mathfrak{n} \|g\|_{L^2(\Gamma)}\|u_\lambda\|_V,
\end{equation}
where $C_\epsilon$ is a positive constant depending only on $n$, $\Omega$, $\aleph$ and $\epsilon$.
Taking $\epsilon = \kappa =(1- \mathfrak{c}\mathfrak{n}^2)/2$ in \eqref{ae2} then yields
\[
(|\lambda |-1-C_\kappa)\|u_\lambda\|_H^2+\kappa\|u_\lambda\|_V^2\le \mathfrak{n} \|g\|_{L^2(\Gamma)}\|u_\lambda\|_V.
\]
As a consequence we have 
$$ | \lambda | \| u_\lambda \|_H^2 + \| u_\lambda \|_V^2 \leq \frac{2\mathfrak{n}^2}{\kappa^2} \|g\|_{L^2(\Gamma)}^2, $$
whenever $|\lambda| \geq (1+C_\kappa) \slash (1 - \kappa \slash 4)$, and 
\eqref{lim1} follows readily from this.
\end{proof}


Armed with Lemma \ref{lemma3} we can examine the dependence of (the trace of) the solution to the BVP \eqref{bvp1} with respect to $q$. 
More precisely, we shall establish that the influence of the potential on $u_\lambda(g)$ is, in some sense, dimmed as the spectral parameter $\lambda$ goes to $-\infty$. 
\begin{lemma}\label{lemma5.1}
Let $q$ and $\tilde{q}$ be in $\mathrm{Q}(n/2,\aleph)$. 
Then, for all
$g\in \mathfrak{h}$, we have
\begin{equation}\label{lim2}
\lim_{\lambda=\Re \lambda \rightarrow -\infty}\|u_\lambda(g)_{|\Gamma}-\tilde{u}_\lambda(g)_{|\Gamma}\|_{H^{1/2}(\Gamma)}=0.
\end{equation}
\end{lemma}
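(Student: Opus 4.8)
The plan is to compare $u_\lambda(g)$ and $\tilde u_\lambda(g)$ by writing down the BVP satisfied by their difference and then using the resolvent operator associated with $q$ together with the $a~priori$ estimate of Lemma \ref{lemma3} applied to $\tilde q$. Fix $g \in \mathfrak{h}$ and choose $G \in \dot g$ (note that $\dot g$ and $\mathfrak h$ depend only on $\alpha$, not on the potential, so the same $G$ works for both problems). Set $w_\lambda := u_\lambda(g) - \tilde u_\lambda(g)$. Since $(-\Delta + q - \lambda) u_\lambda(g) = 0$ and $(-\Delta + \tilde q - \lambda)\tilde u_\lambda(g) = 0$ in $\Omega$, subtracting gives $(-\Delta + q - \lambda) w_\lambda = (\tilde q - q)\tilde u_\lambda(g)$ in $\Omega$, while $\partial_\nu w_\lambda + \alpha w_\lambda{}_{|\Gamma} = 0$ on $\Gamma$ because both $u_\lambda(g)$ and $\tilde u_\lambda(g)$ satisfy the same Robin condition with datum $g$. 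Hence, for $\lambda \in \rho(A) \cap \rho(\tilde A)$ with $\lambda$ real and negative enough, $w_\lambda = (A-\lambda)^{-1}\big((\tilde q - q)\tilde u_\lambda(g)\big)$ — one should check that $(\tilde q - q)\tilde u_\lambda(g) \in V^\ast$, which follows from $\tilde q - q \in L^{n/2}(\Omega)$ and \cite[Lemma 1.1]{Ch2019} (the bound $|\int (\tilde q-q) u \bar v| \le c_\Omega \|\tilde q - q\|_{L^{n/2}}\|u\|_V\|v\|_V$), giving $\|(\tilde q - q)\tilde u_\lambda(g)\|_{V^\ast} \le 2\aleph c_\Omega \|\tilde u_\lambda(g)\|_V$.

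Next I would estimate the $H^{1/2}(\Gamma)$ norm of $w_\lambda{}_{|\Gamma}$. Since $w_\lambda \in V$ with $w_\lambda{}_{|\Gamma}$ its trace, $\|w_\lambda{}_{|\Gamma}\|_{H^{1/2}(\Gamma)} \le \|w_\lambda\|_V$ by definition of the graph norm on $H^{1/2}(\Gamma)$. By the resolvent estimate \eqref{re2} of Proposition \ref{proposition1} (applied with the potential $q$, which is legitimate since $q \in \mathrm{Q}(n/2,\aleph)$), we have for $\lambda = \Re\lambda \to -\infty$
\[
\|w_\lambda\|_V \le C \big\|\big((\lambda_k+\lambda^\ast)/(\lambda_k-\lambda)\big)\big\|_{\ell^\infty}\,\big\|(\tilde q - q)\tilde u_\lambda(g)\big\|_{V^\ast} \le C' \big\|\big((\lambda_k+\lambda^\ast)/(\lambda_k-\lambda)\big)\big\|_{\ell^\infty}\,\|\tilde u_\lambda(g)\|_V.
\]
Now Lemma \ref{lemma3}, applied to $\tilde q \in \mathrm{Q}(n/2,\aleph)$, bounds $\|\tilde u_\lambda(g)\|_V \le C \|g\|_{L^2(\Gamma)}$ uniformly for $\lambda \le -\lambda_+$. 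It therefore suffices to show that the $\ell^\infty$ factor tends to $0$ as $\lambda \to -\infty$ along the reals. For $\lambda < -\lambda^\ast \le -\lambda_+$ we have $\lambda_k - \lambda = (\lambda_k + \lambda^\ast) + (-\lambda^\ast - \lambda)$ with both summands positive (recall $\lambda_k > -\lambda^\ast$ from \eqref{lb}), so
\[
0 < \frac{\lambda_k+\lambda^\ast}{\lambda_k-\lambda} = \frac{\lambda_k+\lambda^\ast}{(\lambda_k+\lambda^\ast) + (-\lambda^\ast-\lambda)} \le \frac{\lambda_k+\lambda^\ast}{(-\lambda^\ast-\lambda)} \wedge 1,
\]
and splitting the supremum over $k$ into the finitely many $k$ with $\lambda_k + \lambda^\ast$ below a fixed threshold (for which the first bound $\to 0$) and the rest (for which we would need a uniform smallness argument) is the delicate point. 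A cleaner route: since $t \mapsto (t+\lambda^\ast)/(t-\lambda)$ is increasing in $t \ge -\lambda^\ast$ with supremum $1$ but for each fixed truncation level it decays like $|\lambda|^{-1}$, and since actually $(t+\lambda^\ast)/(t-\lambda) \le 1$ and $\to 0$ pointwise while the tail $\lambda_k \to \infty$ is what prevents uniform decay, one instead uses the factor $(\lambda_k+\lambda^\ast)^{1/2}/(\lambda_k-\lambda)$ appearing naturally in \eqref{es9}, i.e. I would re-run the Green-formula argument keeping $\|(A+\lambda^\ast)^{-1/2}\big((\tilde q - q)\tilde u_\lambda(g)\big)\|_H$ and exploit that the relevant multiplier $\sup_k (\lambda_k+\lambda^\ast)^{1/2}/|\lambda_k - \lambda|$ does tend to $0$: indeed $(t+\lambda^\ast)^{1/2}/(t-\lambda) \le (t+\lambda^\ast)^{1/2}/(t+\lambda^\ast) \cdot \frac{t+\lambda^\ast}{t-\lambda}$, and the map $t\mapsto (t+\lambda^\ast)^{1/2}/(t-\lambda)$ attains its max at $t = -2\lambda - \lambda^\ast$ with value $\tfrac12(-\lambda - \lambda^\ast)^{-1/2} \to 0$.

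The main obstacle is precisely this last uniform-in-$k$ smallness: the crude bound $\big\|\big((\lambda_k+\lambda^\ast)/(\lambda_k-\lambda)\big)\big\|_{\ell^\infty}$ does not go to zero (it stays $\le 1$ but the tail keeps it bounded away from $0$ is false — it does go to $0$? no: for $\lambda_k$ huge and $|\lambda|$ fixed the ratio is near $1$), so one must not lose the full strength of the Hilbert-space estimate. The fix is to route the argument through \eqref{es9}/\eqref{es9b} rather than \eqref{re2} directly, extracting the multiplier $\sup_k (\lambda_k+\lambda^\ast)^{1/2}/|\lambda_k-\lambda|$ (which is $O(|\lambda|^{-1/2})$) paired with $\|(A+\lambda^\ast)^{-1/2}(\tilde q - q)\tilde u_\lambda(g)\|_H \le \|(A+\lambda^\ast)^{-1/2}\|_{\mathcal B(V^\ast,H)}\cdot 2\aleph c_\Omega \|\tilde u_\lambda(g)\|_V \le C\|g\|_{L^2(\Gamma)}$ via Lemma \ref{lemma3}. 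Combining, $\|u_\lambda(g)_{|\Gamma} - \tilde u_\lambda(g)_{|\Gamma}\|_{H^{1/2}(\Gamma)} \le C |\lambda|^{-1/2}\|g\|_{L^2(\Gamma)} \to 0$ as $\lambda = \Re\lambda \to -\infty$, which is \eqref{lim2}.
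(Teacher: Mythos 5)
Your setup is exactly the paper's: the difference $w_\lambda:=u_\lambda(g)-\tilde u_\lambda(g)$ solves the homogeneous Robin problem with source $(\tilde q-q)\tilde u_\lambda$, hence $w_\lambda=(A-\lambda)^{-1}\bigl((\tilde q-q)\tilde u_\lambda\bigr)$, and one combines \eqref{re2}, Lemma \ref{lemma3} and $\|w_\lambda{}_{|\Gamma}\|_{H^{1/2}(\Gamma)}\le\|w_\lambda\|_V$. Your diagnosis of the obstacle is also correct and worth stressing: for real $\lambda<-\lambda^\ast$ the map $t\mapsto(t+\lambda^\ast)/(t-\lambda)$ increases to $1$ on $[-\lambda^\ast,\infty)$ and $\lambda_k\to\infty$, so $\|((\lambda_k+\lambda^\ast)/(\lambda_k-\lambda))\|_{\ell^\infty}=1$ for every such $\lambda$, and this factor does \emph{not} tend to $0$. (The paper's own proof stops at precisely this inequality and asserts that the right-hand side vanishes as $\lambda\to-\infty$, so your objection applies to the published argument as well.)

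However, your repair does not close the gap. The sum $\sum_k\frac{\lambda_k+\lambda^\ast}{|\lambda_k-\lambda|^2}|\langle f,\phi_k\rangle|^2$ in \eqref{es9} admits two consistent estimates: either $\bigl(\sup_k\frac{\lambda_k+\lambda^\ast}{|\lambda_k-\lambda|}\bigr)^2$ times $\sum_k|\langle f,\phi_k\rangle|^2/(\lambda_k+\lambda^\ast)=\|(A+\lambda^\ast)^{-1/2}f\|_H^2$, or $\bigl(\sup_k\frac{(\lambda_k+\lambda^\ast)^{1/2}}{|\lambda_k-\lambda|}\bigr)^2$ times $\|f\|_H^2$. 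You take the decaying multiplier from the second pairing and the norm from the first; the resulting inequality is false (test $f=\phi_j$ with $\lambda_j\gg|\lambda|$: the sum equals $(\lambda_j+\lambda^\ast)/|\lambda_j-\lambda|^2\approx\lambda_j^{-1}$, while your bound is $O\bigl(|\lambda|^{-1}\lambda_j^{-1}\bigr)$). The honest version of your fix requires $f=(\tilde q-q)\tilde u_\lambda\in H$, which is not available: for $q,\tilde q\in L^{n/2}(\Omega)$ and $\tilde u_\lambda\in V\subset L^{p^\ast}(\Omega)$ the product lies only in $L^{p}(\Omega)$ with $p=2n/(n+2)<2$. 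A repair that does work: keep the $\lambda$-uniform bound $\|(A-\lambda)^{-1}\|_{\mathcal B(V^\ast,V)}\le C$ and instead make the source small in $V^\ast$ by truncating $b:=\tilde q-q$ at level $M$. The bounded part satisfies $\|b\chi_{\{|b|\le M\}}\tilde u_\lambda\|_{V^\ast}\le cM\|\tilde u_\lambda\|_H\le cM|\lambda|^{-1/2}\|g\|_{L^2(\Gamma)}$ by \eqref{lim1}, while $\|b\chi_{\{|b|>M\}}\tilde u_\lambda\|_{V^\ast}\le c\|b\chi_{\{|b|>M\}}\|_{L^{n/2}(\Omega)}\|\tilde u_\lambda\|_V$ is small uniformly in $\lambda$ once $M$ is large, by dominated convergence; letting $\lambda\to-\infty$ and then $M\to\infty$ yields \eqref{lim2}.
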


\begin{proof}
Let $\lambda \in (-\infty,-\lambda_+]$, where $\lambda_+$ is the same as in Lemma \ref{lemma3}. We use the same notation as in the proof of Lemma \ref{lemma3} and write $u_\lambda$ (resp., $\tilde{u}_\lambda$) instead of $u_\lambda(g)$ (resp., $\tilde{u}_\lambda(g)$). Since
\[
(-\Delta +q-\lambda )(u_\lambda - \tilde{u}_\lambda)=(\tilde{q}-q)\tilde{u}_\lambda\quad \mathrm{in}\; \Omega
\]
and 
\[
\partial_\nu(u_\lambda - \tilde{u}_\lambda)+\alpha (u_\lambda - \tilde{u}_\lambda)_{|\Gamma}=0\quad \mbox{on}\; \Gamma, 
\]
we have
$$u_\lambda - \tilde{u}_\lambda=(A-\lambda)^{-1} ((\tilde{q}-q)\tilde{u}_\lambda), $$
whence
\begin{equation}
\label{a1}
\| u_\lambda - \tilde{u}_\lambda \|_V \le C \|((\lambda_k+\lambda^\ast)/(\lambda_k-\lambda))\|_{\ell^\infty} \| (\tilde{q}-q)\tilde{u}_\lambda\|_{V^\ast},
\end{equation}
by \eqref{re2}, where $C$ is a positive constant which is independent of $\lambda$. 

We are left with the task of estimating $\| (\tilde{q}-q)\tilde{u}_\lambda\|_{V^\ast}$. For this purpose, we notice from $\tilde{q}-q \in L^{n/2}(\Omega)$ and from $\tilde{u}_\lambda \in L^{p^\ast}(\Omega)$ that $(\tilde{q}-q)\tilde{u}_\lambda \in L^p(\Omega)$. Thus, bearing in mind that the embedding $V \subset L^{p^\ast}(\Omega)$ is continuous, we infer from H\"older's inequality that
\begin{eqnarray*}
\| (\tilde{q}-q)\tilde{u}_\lambda\|_{V^\ast} 
& \le & \| \tilde{q}-q \|_{L^{n/2}(\Omega)} \| \tilde{u}_\lambda \|_{L^{p^\ast}(\Omega)} \\
& \le & 2 \aleph \| \tilde{u}_\lambda \|_{V}.
\end{eqnarray*}
In light of \eqref{lim1}, this entails that 
$$\| (\tilde{q}-q)\tilde{u}_\lambda\|_{V^\ast} \leq C \|g\|_{L^2(\Gamma)},$$
for some constant $C$ depending only on $n$, $\Omega$, $\aleph$ and $\mathfrak{c}$. From this, \eqref{a1} and the continuity of the trace operator $w\in V\mapsto w_{|\Gamma}\in H^{1/2}(\Gamma)$, we obtain that
\[
\|{u_\lambda}_{|\Gamma}-{\tilde{u}_\lambda}{_{|\Gamma}} \|_{H^{1/2}(\Gamma)}\le C \|((\lambda_k+\lambda^\ast)/(\lambda_k-\lambda))\|_{\ell^\infty} \|g\|_{L^2(\Gamma)},
\]
where $C$ is independent of $\lambda$. Now \eqref{lim2} follows immediately from this upon sending $\lambda$ to $-\infty$ on both sides of the above inequality.
\end{proof}

\subsection{$H^2$-regularity of the eigenfunctions} 
For all $q \in L^{n/2}(\Omega)$,
we have $\phi_k \in V$, $k \ge 1$, but it is no guaranteed in general that $\phi_k \in H^2(\Omega)$. 
Nevertheless, we shall establish that the regularity of the eigenfunctions of $A$ can be upgraded to $H^2$, provided that the potential $q$ is taken in  $L^n(\Omega)$.

\begin{lemma}\label{lemma5.0}
Let $q\in \mathrm{Q}(n,\aleph)$ and assume that $\alpha\in C^{0,1}(\Gamma)$. Then, for all $k\in \mathbb{N}$, we have $\phi_k\in H^2(\Omega)$ and the estimate
\begin{equation}\label{5.1}
\|\phi_k\|_{H^2(\Omega)}\le C(1+|\lambda_k|),
\end{equation}
where $C$ is a positive constant depending on $n$, $\Omega$ and $\aleph$ and $\|\alpha\|_{C^{0,1}(\Gamma)}$.
\end{lemma}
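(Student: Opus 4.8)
The plan is to bootstrap the $H^2$-regularity of $\phi_k$ from the Robin eigenvalue equation \eqref{ee} using standard elliptic regularity for the Robin Laplacian, after first absorbing the potential term into the right-hand side with an $L^2$ bound. The starting point is that $\phi_k \in V$ solves $-\Delta \phi_k = (\lambda_k - q)\phi_k$ in $\Omega$ with $\partial_\nu \phi_k + \alpha \phi_k{}_{|\Gamma} = 0$ on $\Gamma$. Since $q \in L^n(\Omega)$ and $\phi_k \in V \hookrightarrow L^{p^\ast}(\Omega)$ with $p^\ast = 2n/(n-2)$, Hölder's inequality gives $q\phi_k \in L^{\rho}(\Omega)$ for $\rho$ determined by $1/\rho = 1/n + (n-2)/(2n) = 1/2$, i.e. $q\phi_k \in L^2(\Omega)$, with $\|q\phi_k\|_{L^2(\Omega)} \le \|q\|_{L^n(\Omega)} \|\phi_k\|_{L^{p^\ast}(\Omega)} \le C\aleph \|\phi_k\|_V$. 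Hence the full right-hand side $f_k := (\lambda_k - q)\phi_k$ lies in $H = L^2(\Omega)$, with $\|f_k\|_H \le C(1+|\lambda_k|)\|\phi_k\|_V$.

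Next I would invoke $H^2$-regularity for the Robin boundary value problem on a $C^{1,1}$ domain: if $u \in V$ solves $-\Delta u = f \in L^2(\Omega)$ with $\partial_\nu u + \alpha u_{|\Gamma} = 0$ on $\Gamma$ and $\alpha \in C^{0,1}(\Gamma)$, then $u \in H^2(\Omega)$ with $\|u\|_{H^2(\Omega)} \le C(\|f\|_{L^2(\Omega)} + \|u\|_{H^1(\Omega)})$, the constant depending only on $n$, $\Omega$ and $\|\alpha\|_{C^{0,1}(\Gamma)}$. This is the classical elliptic estimate for oblique/Robin boundary conditions; the Lipschitz regularity of $\alpha$ is exactly what is needed to handle the zeroth-order boundary term (one can also reduce to the Neumann case by treating $-\alpha u_{|\Gamma}$ as an $H^{1/2}(\Gamma)$ Neumann datum, using the trace $u_{|\Gamma} \in H^{1/2}(\Gamma)$ and the multiplier bound $\|\alpha u_{|\Gamma}\|_{H^{1/2}(\Gamma)} \le C\|\alpha\|_{C^{0,1}(\Gamma)}\|u_{|\Gamma}\|_{H^{1/2}(\Gamma)}$, then apply $H^2$ Neumann regularity). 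Applying this to $u = \phi_k$, $f = f_k$ yields $\phi_k \in H^2(\Omega)$ and
\[
\|\phi_k\|_{H^2(\Omega)} \le C\big( \|f_k\|_H + \|\phi_k\|_V \big) \le C(1+|\lambda_k|)\|\phi_k\|_V.
\]

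It remains to control $\|\phi_k\|_V$. Taking $v = \phi_k$ in the eigenvalue identity $\mathfrak{a}(\phi_k,v) = \lambda_k(\phi_k,v)$ and using the coercivity estimate \eqref{co} together with $\|\phi_k\|_H = 1$ gives $\kappa \|\phi_k\|_V^2 \le \mathfrak{a}(\phi_k,\phi_k) + \lambda^\ast = \lambda_k + \lambda^\ast \le |\lambda_k| + \lambda^\ast$, so that $\|\phi_k\|_V \le C(1+|\lambda_k|)^{1/2} \le C(1+|\lambda_k|)^{1/2}$. Inserting this into the previous display gives $\|\phi_k\|_{H^2(\Omega)} \le C(1+|\lambda_k|)^{3/2}$, which is weaker than claimed; to recover the linear bound \eqref{5.1} I would instead keep $\|\phi_k\|_V \le C(1+|\lambda_k|)^{1/2}$ but re-examine the regularity estimate, absorbing the lower-order contribution more carefully — write $\|\phi_k\|_{H^2(\Omega)} \le C(\|\lambda_k \phi_k\|_H + \|q\phi_k\|_H + \|\phi_k\|_V)$ and note $\|\lambda_k\phi_k\|_H = |\lambda_k|$ while $\|q\phi_k\|_H \le C\aleph\|\phi_k\|_V \le C(1+|\lambda_k|)^{1/2} \le C(1+|\lambda_k|)$, so in fact $\|\phi_k\|_{H^2(\Omega)} \le C(1+|\lambda_k|)$ directly. (If the crude interpolation of $\|q\phi_k\|_H$ through $\|\phi_k\|_V$ is not sharp enough, one uses instead the refined bound $\|q\phi_k\|_H \le \epsilon\|\phi_k\|_{H^2(\Omega)} + C_\epsilon\|\phi_k\|_H$ coming from $q \in L^n$ and an interpolation inequality between $H^2$ and $L^2$ — analogous to \eqref{ii1} — and absorbs the $\epsilon\|\phi_k\|_{H^2(\Omega)}$ term into the left-hand side.) The main obstacle is precisely this bootstrap: making the dependence on $\lambda_k$ linear rather than $|\lambda_k|^{3/2}$ requires treating the potential term as a small perturbation of $-\Delta$ in the $H^2$-scale (via an interpolation/absorption argument) rather than bounding it through the $V$-norm of $\phi_k$.
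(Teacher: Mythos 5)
Your argument is correct and follows essentially the same route as the paper: the bound $\|\phi_k\|_V\le \kappa^{-1/2}(\lambda_k+\lambda^\ast)^{1/2}$ from the coercivity estimate \eqref{co}, the bound $\|q\phi_k\|_{L^2(\Omega)}\le C\|\phi_k\|_V$ from $q\in L^n(\Omega)$ and the Sobolev embedding, and $H^2$ elliptic regularity for the Robin problem obtained exactly as in your parenthetical remark, namely by lifting the boundary datum $\alpha\,{\phi_k}_{|\Gamma}\in H^{1/2}(\Gamma)$ (this is where $\alpha\in C^{0,1}(\Gamma)$ is used) and reducing to the Neumann case. The only caveat is that your closing sentence is a red herring: no absorption or interpolation in the $H^2$-scale is required, since your own intermediate computation --- $\|\lambda_k\phi_k\|_{L^2(\Omega)}=|\lambda_k|$ because $\|\phi_k\|_H=1$, together with $\|q\phi_k\|_{L^2(\Omega)}\le C(1+|\lambda_k|)^{1/2}$ --- already yields the linear bound \eqref{5.1}, which is precisely how the paper concludes.
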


\begin{proof} 
Let us start by noticing from \eqref{co} that
\begin{equation}\label{5.0}
\|\phi_k\|_V\le \kappa^{-1/2}(\lambda_k+\lambda ^\ast)^{1/2},\ k \geq 1.
\end{equation}
On the other hand we have $q\phi_k\in H$ for all $k\in \mathbb{N}$, and the estimate
 \begin{equation}\label{5.0.1}
 \|q\phi_k\|_H\le \|q\|_{L^n(\Omega)}\|\phi_k\|_{L^{p^\ast}(\Omega)}\le C_0 \|\phi_k\|_V,
 \end{equation}
where $C_0$ is a positive constant depending only on $n$, $\Omega$, $\mathfrak{c}$ and $\aleph$.
 
Next, bearing in mind that $\alpha \phi_k{_{|\Gamma}} \in H^{1/2}(\Gamma)$, we pick
$\phi_k^0\in H^2(\Omega)$ such that $\partial_\nu \phi_k^0=\alpha \phi_k{_{|\Gamma}}$. Evidently, we have
 \[
 -\Delta (\phi_k+\phi_k^0)=(\lambda_k-q)\phi_k-\Delta\phi_k^0\; \mbox{in}\; \Omega \quad \mbox{and} \quad \partial_\nu(\phi_k+\phi_k^0)=0\; \mbox{on}\; \Gamma.
 \]
Since $(\lambda_k-q)\phi_k-\Delta\phi_k^0\in H$, \cite[Theorem 3.17]{Tr} then yields that $\phi_k+\phi_k^0\in H^2(\Omega)$. As a consequence we have $\phi_k=(\phi_k+\phi_k^0) -\phi_k^0 \in H^2(\Omega)$ and 
\[
\|\phi_k\|_{H^2(\Omega)}\le C_1(\|(\lambda_k-q)\phi_k\|_H+\|\phi_k\|_V)
\]
for some constant $C_1>0$ which depends only on $n$, $\Omega$ and $\|\alpha\|_{C^{0,1}(\Gamma)}$, by \cite[Lemma 3.181]{Tr} (see also \cite[Theorem 2.3.3.6]{Gr}). Putting this together with \eqref{5.0}-\eqref{5.0.1}, we obtain \eqref{5.1}.
\end{proof}


\section{Proof of Theorems \ref{theorem1}, \ref{theorem2} and \ref{theorem3}}
\label{sec-proof}


\subsection{Proof of Theorem \ref{theorem1}}
\label{sec-prthm1}
We use the same notations as in the previous sections. Namely, we denote by $\tilde{A}$ is the operator generated in $H$ by $\mathfrak{a}$ where $\tilde{q}$ is substituted for $q$, and we write $u_\lambda$ (resp., $\tilde{u}_\lambda$) instead of $u_\lambda (g)$ (resp., $\tilde{u}_\lambda(g)$). 
Let $\lambda \in \mathbb{C}\setminus \mathbb{R}$ and pick $\mu$ in $\rho(A)\cap \rho(\tilde{A})$. Depending on whether $\ell=1$ or $\ell \ge 2$, we have either
\[
{u_\lambda}_{|\Gamma} -{u_\mu}_{|\Gamma}={\tilde{u}_\lambda}{_{|\Gamma}} - {\tilde{u}_\mu}{_{|\Gamma}}
\]
or
\begin{eqnarray*}
& & {u_\lambda}_{|\Gamma} -{u_\mu}_{|\Gamma}-(\lambda-\mu) \sum_{k= 1}^{\ell-1}\frac{(g,\psi_k)}{(\lambda_k-\lambda)(\lambda_k-\mu)}\psi_k
\\
&=& {\tilde{u}_\lambda}{_{|\Gamma}} -{\tilde{u}_\mu}{_{|\Gamma}}-(\lambda-\mu)\sum_{k= 1}^{\ell-1}\frac{(g,\psi_k)}{(\tilde{\lambda}_k-\lambda)(\tilde{\lambda}_k-\mu)}\psi_k,
\end{eqnarray*}
by virtue of \eqref{s2}. Sending $\Re \mu$ to $-\infty$ in these two identities, where $\Re \mu$ denotes the real part of $\mu$, we get with the help of \eqref{lim2} that
\begin{equation}\label{RtoD}
{u_\lambda}_{|\Gamma}- {\tilde{u}_\lambda}{_{|\Gamma}}=R_\lambda^\ell,
\end{equation}
where 
\[
R_\lambda^\ell=R_\lambda^\ell (g) := \left\{ \begin{array}{ll} 0 & \mbox{if}\ \ell=1 \\ \sum_{k= 1}^{\ell-1}\frac{(\tilde{\lambda}_k -\lambda_k)(g,\psi_k)}{(\lambda_k-\lambda)(\tilde{\lambda}_k-\lambda)}\psi_k & \mbox{if}\ \ell \ge 2. \end{array} \right. 
\]
Notice for further use that there exists $\lambda_\ast>0$ such that the estimate
\begin{equation}\label{Re}
| ( R_\lambda ^\ell ,h ) |\le \frac{C_\ell}{|\lambda|^2}\|g\|_{L^2(\Gamma)}\|h\|_{L^2(\Gamma)},\quad |\lambda |\ge \lambda_\ast,\quad g,h\in \mathfrak{h}, 
\end{equation}
holds for some constant $C_\ell=C_\ell(q,\tilde{q})$ which is independent of $\lambda$.

Let us now consider two functions $G \in \mathfrak{H}$  and $H \in \mathfrak{H}$, that will be made precise below, and put $u:=(A-\lambda)^{-1}(\Delta -q+\lambda)G+G$, $g:=\partial_\nu G+\alpha G_{|\Gamma}$ and $h:=\partial_\nu H +\alpha H_{|\Gamma}$. Then, bearing in mind that $\partial_\nu u + u_{|  \Gamma}=g$, the Green formula yields that
\begin{equation}
\label{eq-G}
\int_\Gamma u\overline{h}ds(x)=\int_\Gamma g\overline{H}ds(x)+\int_\Omega (u\Delta\overline{H}-\Delta u\overline{H})dx.
\end{equation}
Further, taking into account that $\Delta u=(q-\lambda )u$ in $\Omega$, we see that
\begin{eqnarray*}
u\Delta\overline{H}-\Delta u\overline{H} & =& u (\Delta -q+\lambda )\overline{H} \\
& = & \left( (A-\lambda)^{-1}(\Delta -q+\lambda )G+G \right) (\Delta -q+\lambda )\overline{H}.
\end{eqnarray*}
Thus, assuming that $(\Delta +\lambda)G=(\Delta +\lambda)H=0$, the above identity reduces to
\[
u\Delta\overline{H}-\Delta u\overline{H}= -\left( -(A-\lambda)^{-1}qG+G \right) q\overline{H},
\]
and \eqref{eq-G} then reads
\begin{equation}\label{id1}
\int_\Gamma u\overline{h}ds(x)=\int_\Gamma g\overline{H}ds(x)-\int_\Omega \left( -(A-\lambda)^{-1}qG+G \right) q\overline{H} dx.
\end{equation}

This being said, we set $\lambda_\tau:=(\tau+i)^2$ for some fixed $\tau \in [1,+\infty)$, pick two vectors $\omega$ and $\theta$ in $\mathbb{S}^{n-1}$, and we consider the special case where
\[
G(x)=\mathfrak{e}_{\lambda_\tau,\omega}(x):=e^{i\sqrt{\lambda_\tau}\omega \cdot x},\quad \overline{H}(x)=\mathfrak{e}_{\lambda_\tau,-\theta}(x):= e^{-i\sqrt{\lambda_\tau}\theta \cdot x}. 
\]
Next, we put
$$
S(\lambda_\tau,\omega ,\theta) :=\int_\Gamma u_\lambda (g)\overline{h}ds(x),\ \quad \tilde{S}(\lambda_\tau,\omega ,\theta) :=\int_\Gamma \tilde{u}_\lambda (g)\overline{h}ds(x),
$$
in such a way that
\begin{equation}
\label{eq-S}
S(\lambda_\tau,\omega ,\theta)-\tilde{S}(\lambda,\omega ,\theta)=\langle R_{\lambda_\tau} ^\ell(g) , h\rangle.
\end{equation}
Then, taking into account that
\[
g(x)=(i\sqrt{\lambda_\tau}\omega \cdot \nu+\alpha)e^{i\sqrt{\lambda_\tau}\omega \cdot x},\quad \overline{h}(x)= (-i\sqrt{\lambda_\tau}\theta \cdot \nu+\alpha)e^{-i\sqrt{\lambda_\tau}\theta \cdot x},
\]
we have $\|g\|_{L^2(\Gamma)}\|h\|_{L^2(\Gamma)}\le C\tau^2$ for some positive constant $C$ which is independent of $\omega$, $\theta$ and $\tau$, and we infer from \eqref{Re} and \eqref{eq-S} that
\begin{equation}
\label{S1}
\lim_{\tau \rightarrow \infty}\sup_{\omega,\theta \in \mathbb{S}^{n-1}} \left( S(\lambda_\tau,\omega ,\theta)-\tilde{S}(\lambda_\tau,\omega ,\theta) \right)=0.
\end{equation}
On the other hand, \eqref{id1} reads
\begin{equation}
\label{S1b}
S(\lambda_\tau,\omega ,\theta)  =S_0(\lambda_\tau,\omega ,\theta)  
+\int_\Gamma(i\sqrt{\lambda_\tau}\omega \cdot\nu +\alpha)e^{-i\sqrt{\lambda_\tau}(\theta-\omega)\cdot x}ds(x),
\end{equation}
where
\begin{equation}
\label{id3}
S_0(\lambda_\tau,\omega ,\theta):=\int_\Omega (A-\lambda_\tau)^{-1}(q\mathfrak{e}_{\lambda_\tau,\omega})q\mathfrak{e}_{\lambda_\tau,-\theta}dx-\int_\Omega qe^{-i\sqrt{\lambda_\tau}(\theta-\omega)\cdot x}dx.
\end{equation}
Now, we fix $\xi$ in $\mathbb{R}^n$, pick $\eta \in \mathbb{S}^{n-1}$ such that $\xi \cdot \eta =0$, and for all $\tau \in \left( |\xi|/2,+\infty \right)$ we set 
\begin{equation}
\label{es4}
\omega_\tau :=\left(1-|\xi|^2/(4\tau ^2)\right)^{1/2}\eta -\xi/(2\tau),\quad \theta_\tau :=\left(1-|\xi|^2/(4\tau ^2)\right)^{1/2}\eta +\xi/(2\tau)
\end{equation}
in such a way that
\begin{equation}
\label{id3b}
\lim_{\tau \rightarrow +\infty} \sqrt{\lambda_\tau}(\theta_\tau-\omega_\tau) = \xi.
\end{equation}
Evidently, we have
\begin{equation}
\label{gs0}
\|\mathfrak{e}_{\lambda_\tau,\omega_\tau}\|_{L^\infty (\Omega)}\le \|e^{|x|} \|_{L^\infty (\Omega)},\quad \|\mathfrak{e}_{\lambda_\tau,-\theta_\tau}\|_{L^\infty (\Omega)}\le \|e^{|x|} \|_{L^\infty (\Omega)}.
\end{equation}
Next, with reference to the notations $\beta=\max \left( 0,n(2-r)/(2r) \right)$ and $p_\sigma=2n / (n+2\sigma)$, $\sigma \in [0,1]$, of Theorem \ref{theorem2} and Proposition \ref{proposition2}, respectively,
we see that $\beta=0$ and hence that $p_\beta=p_0=2$, when $n \ge 4$, whereas $p_\beta=r \in (3/2,2)$, when $n=3$. Thus, we have $p_\beta\le r$ whenever $n \ge 3$, and consequently $q \in L^{p_\beta}(\Omega)$. It follows from this and \eqref{gs0} that
$q\mathfrak{e}_{\lambda_\tau,\omega_\tau}$ and $q\mathfrak{e}_{\lambda_\tau,-\theta_\tau}$ lie in $L^{p_\beta}(\Omega)$ and satisfy the estimate
\begin{equation}
\label{gs1}
\|q\mathfrak{e}_{\lambda_\tau,\omega_\tau}\|_{L^{p_\beta}(\Omega)}+\|q\mathfrak{e}_{\lambda_\tau,-\theta_\tau} \|_{L^{p_\beta}(\Omega)}\le C \| q \|_{L^r(\Omega)},\quad \tau \in (|\xi|/2,\infty),
\end{equation}
for some positive constant $C=C(n,\Omega)$ depending only on $n$ and $\Omega$. Moreover, for all $\tau \ge \max(|\xi|/2,\tau_\ast)$, we have
\begin{eqnarray}
\label{gm0}
& & \left| \int_\Omega (A-\lambda_\tau)^{-1}(q\mathfrak{e}_{\lambda_\tau,\omega_\tau})q\mathfrak{e}_{\lambda_\tau,-\theta_\tau}dx \right| \\
& \leq & \| (A-\lambda_\tau)^{-1}(q\mathfrak{e}_{\lambda_\tau,\omega_\tau}) \|_{L^{p_\beta^\ast}(\Omega)} \|q\mathfrak{e}_{\lambda_\tau,-\theta_\tau} \|_{L^{p_\beta}(\Omega)} \nonumber\\
& \le & C \tau^{-1+2\beta} \|q\mathfrak{e}_{\lambda_\tau,\omega_\tau}\|_{L^{p_\beta}(\Omega)} \|q\mathfrak{e}_{\lambda_\tau,-\theta_\tau} \|_{L^{p_\beta}(\Omega)},\ \nonumber
\end{eqnarray}
by \eqref{re7}, where $C>0$ is independent of $\tau$. Since $\beta \in [0,1/2)$ from its very definition, we infer from
\eqref{gs1}-\eqref{gm0} that
\begin{equation}
\label{gs2}
\lim_{\tau \rightarrow \infty}\left| \int_\Omega (A-\lambda_\tau)^{-1}(q\mathfrak{e}_{\lambda_\tau,\omega_\tau})q\mathfrak{e}_{\lambda_\tau,-\theta_\tau}dx \right|=0,
\end{equation}
which together with \eqref{id3}-\eqref{id3b} yields that
\[
\lim_{\tau \rightarrow \infty} S_0(\lambda_\tau ,\omega_\tau ,\theta_\tau) =-\int_\Omega qe^{-i\xi \cdot x},\quad \xi \in \mathbb{R}^n.
\]
From this and the identity
\[
\lim_{\tau \rightarrow \infty}\left( S_0(\lambda_\tau ,\omega_\tau ,\theta_\tau)-\tilde{S}_0(\lambda_\tau ,\omega_\tau ,\theta_\tau) \right)=\lim_{\tau \rightarrow \infty} \left( S(\lambda_\tau,\omega_\tau ,\theta_\tau)-\tilde{S}(\lambda_\tau,\omega_\tau ,\theta_\tau) \right)=0,
\]
arising from \eqref{S1}-\eqref{S1b}, it then follows that
\[
\int_\Omega (q-\tilde{q})e^{-i\xi \cdot x}dx=0,\quad \xi \in \mathbb{R}^n.
\]
Otherwise stated, the Fourier transform of $(q-\tilde{q})\chi_\Omega$, where $\chi_\Omega$ is the characteristic function of $\Omega$, is identically zero in $\mathscr{S}'(\mathbb{R}^n)$. By the injectivity of the Fourier transformation, this entails that $q=\tilde{q}$ in $\Omega$.


\subsection{Proof of Theorem \ref{theorem2}}
\label{sec-prthm2}
Pick $\omega$ and $\theta$ be in $\mathbb{S}^{n-1}$, and let $\lambda\in \mathbb{C}\setminus\mathbb{R}$. 
We use the same notations as in the proof of Theorem \ref{theorem1}. Namely,  for all $x \in \Gamma$,
we write
$$
g(x)=g_\lambda(x)=(i\sqrt{\lambda}\omega \cdot \nu+\alpha)e^{i\sqrt{\lambda}\omega \cdot x},\
\overline{h}(x)=\overline{h}_\lambda(x)= (-i\sqrt{\lambda}\theta \cdot \nu+\alpha)e^{-i\sqrt{\lambda}\theta \cdot x}$$
and we recall that
$S(\lambda,\omega ,\theta)=\int_\Gamma u_\lambda (g)\overline{h}ds(x)$. Next, for all $\mu \in \rho(A)\cap \rho(\tilde{A})$ we set
\begin{equation}
\label{4.0}
T(\lambda ,\mu)=T(\lambda ,\mu,\omega ,\theta):=S(\lambda,\omega ,\theta)-S(\mu,\omega ,\theta)=\int_\Gamma \left(u_\lambda (g)-u_\mu (g) \right)\overline{h}ds(x).
\end{equation}
By Lemma \ref{lemma4}, we have
\[
T(\lambda ,\mu)= (\lambda-\mu)\sum_{k\ge 1}\frac{d_k}{(\lambda_k-\lambda)(\lambda_k-\mu)},\ d_k:=(g,\psi_k)(\psi_k,h),
\]
and hence
\begin{equation}
\label{dt}
T(\lambda ,\mu)-\tilde{T}(\lambda ,\mu)=U(\lambda ,\mu)+V(\lambda ,\mu),
\end{equation}
where
\begin{eqnarray}
U(\lambda ,\mu)&:=&\sum_{k\ge 1} \frac{\lambda-\mu}{\lambda_k-\mu} \frac{d_k-\tilde{d}_k}{\lambda_k-\lambda}, \label{du}
\\
V(\lambda ,\mu)&:=&\sum_{k\ge 1}\left(\frac{\lambda-\mu}{(\lambda_k-\lambda)(\lambda_k-\mu)}-\frac{\lambda-\mu}{(\tilde{\lambda}_k-\lambda)(\tilde{\lambda}_k-\mu)}\right)\tilde{d}_k. \label{dv}
\end{eqnarray}

Notice that for all $k \in \mathbb{N}$, we have $d_k-\tilde{d}_k=(g,\psi_k-\tilde{\psi}_k) (\psi_k,h)+ (g,\tilde{\psi}_k)(\psi_k-\tilde{\psi}_k,h)$, which immediately entails that
\begin{equation}
\label{es0}
\frac{|d_k-\tilde{d}_k|}{|\lambda_k-\lambda|}\le \left(\frac{|(g|\psi_k)|}{|\lambda_k-\lambda|}\|h\|_{L^2(\Gamma)}+\rho_k(\lambda)\frac{|(\tilde{\psi}_k|h)|}{|\tilde{\lambda}_k-\lambda|}\|g\|_{L^2(\Gamma)}\right)\|\psi_k-\tilde{\psi}_k\|_{L^2(\Gamma)},
\end{equation}
where $\rho_k(\lambda):=|\tilde{\lambda}_k-\lambda|/|\lambda_k-\lambda|$. Further, since $0 \le \rho_k(\lambda) \le 1+|\lambda_k-\tilde{\lambda}_k| / |\lambda_k-\lambda|$ and $(\lambda_k-\tilde{\lambda}_k)\in \ell^\infty$ by assumption, with $\|(\lambda_k-\tilde{\lambda}_k)\|_{\ell^\infty}\le \aleph$, it is apparent that
$(\rho_k(\lambda ))\in \ell ^\infty$ and that
$$
\|(\rho_k(\lambda ))\|_{\ell ^\infty}\le \zeta(\lambda):=1+\frac{\aleph}{|\Im \lambda|},
$$
where $\Im \lambda$ denotes the imaginary part of $\lambda$.
Thus, applying the Cauchy-Schwarz inequality in \eqref{es0} and Parseval's theorem to the representation formula \eqref{rep1} in Lemma \ref{lemma2}, we get that
\begin{equation}
\label{es1}
\sum_{k=1}^N \frac{|d_k-\tilde{d}_k|}{|\lambda_k-\lambda|}\le M(\lambda)\|(\psi_k-\tilde{\psi}_k)\|_{\ell^2(L^2(\Gamma))},\ N \in \mathbb{N},
\end{equation}
where
\begin{equation}
\label{def-M}
M(\lambda):=\|h\|_{L^2(\Gamma)}\|u_\lambda (g)\|_H +\zeta(\lambda)\|g\|_{L^2(\Gamma)}\|\tilde{u}_\lambda (h)\|_H.
\end{equation}
As a consequence we have $\sum_{k\ge 1} |d_k-\tilde{d}_k| / |\lambda_k-\lambda|<\infty$. Furthermore, taking into account that
$\mu \in (-\infty ,-(\lambda^\ast +1)]\mapsto (\lambda-\mu)/(\lambda_k-\mu)$ is bounded according to \eqref{lb},
we apply the dominated convergence theorem to \eqref{du} and find that
\begin{equation}\label{4.1}
\lim_{\mu=\Re \mu \rightarrow -\infty}U(\lambda ,\mu)=\sum_{k\ge 1}\frac{d_k-\tilde{d}_k}{\lambda_k-\lambda}=:\mathcal{U}(\lambda).
\end{equation}
Moreover, we have
\begin{equation}\label{4.4}
|\mathcal{U}(\lambda)| 
\le M(\lambda)\|(\psi_k-\tilde{\psi}_k)\|_{\ell^2(L^2(\Gamma))},
\end{equation}
according to \eqref{es1}.

Arguing as before with $V$ defined by \eqref{dv} instead of $U$, we obtain in a similar fashion that
\begin{equation}\label{4.3}
\lim_{\mu=\Re \mu \rightarrow -\infty}V(\lambda ,\mu)=\sum_{k\ge 1} \frac{\tilde{\lambda}_k-\lambda_k}{(\lambda_k-\lambda)(\tilde{\lambda}_k-\lambda)} \tilde{d}_k=:\mathcal{V}(\lambda)
\end{equation}
and that
\begin{equation}\label{4.7}
|\mathcal{V}(\lambda)|\le \zeta(\lambda)\|(\tilde{\lambda}_k-\lambda_k)\|_{\ell^\infty}\| \tilde{u}_\lambda(g)\|_H\|\tilde{u}_\lambda(h)\|_H.
\end{equation}

Having seen this, we refer to \eqref{4.0}-\eqref{dt} and deduce from Lemma \ref{lemma5.1}, \eqref{4.1} and \eqref{4.3} that
\begin{equation}
\label{es3}
\int_\Gamma \left( u_\lambda (g)-\tilde{u}_\lambda (g) \right) \overline{h}ds(x)=\mathcal{U}(\lambda)+\mathcal{V}(\lambda).
\end{equation}
Now, taking $\lambda=\lambda_\tau=(\tau+i)^2$ for some fixed $\tau \in \left(  |\xi|/2, \infty \right)$ and $(\omega,\theta)=(\omega_\tau,\theta_\tau)$, where $\omega_\tau$ and $\theta_\tau$ are the same as in \eqref{es4}, we combine \eqref{S1b}-\eqref{id3} with \eqref{es3}. We obtain that
the Fourier transform $\hat{b}$ of $b:=(\tilde{q}-q)\chi_\Omega$, reads
\begin{equation}\label{4.6}
\hat{b}((1+i/\tau)\xi)= \mathcal{U}(\lambda_\tau )+\mathcal{V}(\lambda_\tau)+\mathfrak{R}(\lambda_\tau),
\end{equation}
where
\[
\mathfrak{R}(\lambda_\tau):=\int_\Omega (\tilde{A}-\lambda_\tau)^{-1}(\tilde{q}\mathfrak{e}_{\lambda_\tau,\omega_\tau})\tilde{q}\mathfrak{e}_{\lambda_\tau,-\theta_\tau}dx-\int_\Omega (A-\lambda_\tau)^{-1}(q\mathfrak{e}_{\lambda_\tau,\omega_\tau})q\mathfrak{e}_{\lambda_\tau,-\theta_\tau}dx.
\]
Moreover, for all $\tau \ge \max(|\xi|/2,\tau_\ast)$, we have
\begin{equation}\label{4.8.1}
|\mathfrak{R}(\lambda_\tau)|\le C \tau^{-1+2\beta},
\end{equation}
by \eqref{gs1}-\eqref{gm0}, where $\beta \in [0,1/2)$ is defined in Theorem \ref{theorem2} and $\tau_\ast$ is the same as in Corollary \ref{corollary1}.
Here and in the remaining part of this proof, $C$ denotes a positive constant depending only on $n$, $\Omega$, $\aleph$ and $\mathfrak{c}$, which may change from line to line.  

On the other hand, using that
\begin{eqnarray*}
\left| \hat{b}((1+i/\tau)\xi) -\hat{b}(\xi) \right| & = & \left| \int_{\mathbb{R}^n} e^{-i \xi \cdot x} \left( e^{\frac{\xi}{\tau} \cdot x} - 1 \right) b(x) dx \right| \\
& \le & \frac{| \xi |}{\tau} \left( \sup_{x \in \Omega} e^{(| \xi | / \tau) |x|} \right) \| b \|_{L^1(\mathbb{R}^n)},
\end{eqnarray*}
we get in a similar way to \cite[Eq. (5.1)]{CS} that
$$
|\hat{b}(\xi)|\le |\hat{b}((1+i/\tau)\xi)|+\frac{c|\xi|}{\tau}e^{c|\xi|/\tau}\aleph,\ \tau \in (|\xi|/2, \infty),
$$
for some positive constant $c$ depending only on $\Omega$. Putting this together with
\eqref{4.6}-\eqref{4.8.1} we find that for all $\tau \ge \max(|\xi|/2,\tau_\ast)$,
\begin{equation}\label{4.9}
|\hat{b}(\xi)|\le \frac{C}{\tau^{1-2\beta}}+\frac{c|\xi|}{\tau}e^{c|\xi|/\tau}\aleph+|\mathcal{U}(\lambda_\tau )|+|\mathcal{V}(\lambda_\tau)|.
\end{equation}

To upper bound $|\mathcal{U}(\lambda_\tau )|+|\mathcal{V}(\lambda_\tau)|$ on the right hand side of \eqref{4.9}, we recall from \eqref{sol1} that $u_{\lambda_\tau}(g)=-(A-\lambda_\tau)^{-1} (q \mathfrak{e}_{\lambda_\tau,\omega_\tau}) + \mathfrak{e}_{\lambda_\tau,\omega_\tau}$ and that $\tilde{u}_{\lambda_\tau}(h)=-(\tilde{A}-\lambda_\tau)^{-1} (\tilde{q} \mathfrak{e}_{\lambda_\tau,-\theta_\tau})+\mathfrak{e}_{\lambda_\tau,-\theta_\tau}$, and we combine \eqref{re7} with
\eqref{gs0} and \eqref{gs1}: We get for all $\tau \ge \tau_{\xi}:=\max(1,|\xi|/2,\tau_\ast)$, that
$$\| u_{\lambda_\tau}(g) \|_H + \| \tilde{u}_{\lambda_\tau}(h) \|_H \leq C. $$
This together with the basic estimate $\| g \|_{L^2(\Gamma)}  + \| h \|_{L^2(\Gamma)} \le C \tau$, \eqref{def-M}, \eqref{4.4} and \eqref{4.7}, yield that
$$
|\mathcal{U}(\lambda_\tau )|+|\mathcal{V}(\lambda_\tau)|\le C\left(\tau\|(\psi_k-\tilde{\psi}_k)\|_{\ell^2(L^2(\Gamma))}+\|(\tilde{\lambda}_k-\lambda_k)\|_{\ell^\infty}\right),\ \tau \in [ \tau_{\xi},\infty ).
$$
Inserting this into \eqref{4.9}, we find that 
\begin{equation}\label{4.12}
|\hat{b}(\xi)|\le \frac{C}{\tau^{1-2\beta}}+\frac{c|\xi|}{\tau}e^{c|\xi|/\tau}\aleph+C \tau \delta,\ \tau \in [ \tau_{\xi},\infty ),
\end{equation}
where we have set
\begin{equation}
\label{def-delta}
\delta:= \| (\psi_k-\tilde{\psi}_k)\|_{\ell^2(L^2(\Gamma))}+\|(\tilde{\lambda}_k-\lambda_k)\|_{\ell^\infty}.
\end{equation}
Let $\varrho \in (0,1)$ to be made precise further. For all $\tau \in [\tau_\ast,\infty)$, where $\tau_\ast$ is defined in Corollary \ref{corollary1}, it is apparent that the condition $\tau \ge \tau_{\xi}$ is automatically satisfied whenever $\xi \in B(0,\tau^\varrho):= \{ \xi \in \mathbb{R}^n,\ |\xi|< \tau^\varrho \}$.
Thus, squaring both sides of \eqref{4.12} and integrating the obtained inequality over $B(0,\tau^\varrho)$, we get that
$$
\|\hat{b}\|_{L^2(B(0,\tau ^\varrho))}^2\le C\left( \tau^{-2(1-2 \beta) + \varrho n}+ e^{2c\tau^{-(1-\varrho)}} \tau^{\varrho (n+2)-2}+\tau^{2+\varrho n}\delta ^2 \right),\ \tau \in [\tau_\ast,\infty).
$$
Then, taking $\varrho=(1-2\beta)/(n+2)$ in the above line, we obtain that
\begin{equation}
\label{sta1}
\|\hat{b}\|_{L^2(B(0,\tau ^{(1-2\beta)/(n+2)}))}^2\le C \left( \tau^{-(1-2\beta)}+\tau^{(3n+4)/(n+2)}\delta ^2 \right), \tau \in [\tau_\ast,\infty).
\end{equation}
On the other hand, using that the Fourier transform is an isometry from $L^2(\mathbb{R}^n)$ to itself, we have for all $\tau \in [\tau_\ast,\infty)$,
\begin{eqnarray*}
\int_{\mathbb{R}^n \setminus B(0,\tau^{(1-2\beta)/(n+2)})} (1+|\xi|^2|)^{-1}|\hat{b}(\xi)|^2d\xi & \le & 
\tau^{-2(1-2\beta)/(n+2)}\|b\|_{L^2(\mathbb{R}^n)}^2 \\
& \le & C \tau^{-2(1-2\beta)/(n+2)},
\end{eqnarray*}
which together with
\eqref{sta1} yields that
\[
\|b\|_{H^{-1}(\mathbb{R}^n)}^2\le \tau^{-2(1-2\beta)/(n+2)}+\tau^{(3n+4)/(n+2)}\delta ^2,\ \tau \in [\tau_\ast,\infty). 
\]
Assuming that $\delta < \left( 2(1-2\beta)/(3n+4) \right)^{1 /2}=:\delta_0$, we get by minimizing the right hand side of the above estimate with respect to $\tau \in [\tau_\ast,\infty)$, that
$$
\|b\|_{H^{-1}(\mathbb{R}^n)}\le C\delta^{2(1-2\beta)/(3(n+2))},
$$
and the desired stability inequality follows from this upon recalling that $\|q-\tilde{q}\|_{H^{-1}(\Omega)}\le \|b\|_{H^{-1}(\mathbb{R}^n)}$. Finally, we complete the proof by noticing that for all $\delta \ge \delta_0$, we have
 $$ \|q-\tilde{q}\|_{H^{-1}(\Omega)}\le \|q-\tilde{q}\|_{L^2(\Omega)} \leq \left( 2 \aleph \delta_0^{-2(1-2\beta)/(3(n+2))} \right) \delta^{2(1-2\beta)/(3(n+2))}. $$


\subsection{Proof of Theorem \ref{theorem3}}
Upon possibly substituting $q+\lambda^\ast+1$ (resp., $\tilde{q}+\lambda^\ast+1$) for $q$ (resp., $\tilde{q}$), we shall assume without loss of generality in the sequel, that $\lambda_k\ge 1$ (resp., $\tilde{\lambda}_k \ge 1$) for all $k \ge 1$.
Next, taking into account that $q=\tilde{q}$ in $\Omega_0$, we notice that the function $u_k:=\phi_k-\tilde{\phi}_k$, $k \ge 1$, satisfies
\begin{equation}
\label{es20}
(-\Delta +q-\lambda_k)u_k=(\lambda_k-\tilde{\lambda}_k)\tilde{\phi}_k\; \mbox{in}\; \Omega_0,\quad \partial_\nu u_k+\alpha u_k=0\; \mbox{on}\; \Gamma.
\end{equation}
Let $s \in (0,1/2)$ fixed arbitrarily. It follows from \cite[Theorem 1.1]{Ch2022} (with $\eta_1=1/2$ and $\eta_0$ chosen so that $(1/2-\eta_0)/(1+\eta_0)=s/4$) and \cite[comments in Section 1.3]{Ch2022} that 
there exist three  constants
$C=C(n,\Omega_0,\Gamma_{\ast})>0$, $\mathfrak{b}=\mathfrak{b}(n,\Omega_0,\Gamma_{\ast},s)>0$ and $\gamma=\gamma(n,\Omega_0)>0$, such that for all $r \in (0,1)$ and all $\lambda \in [0,+\infty)$, we have
\begin{equation}\label{UC1}
C\left(\|u\|_{H^1(\Gamma_0)}+ \|\partial_\nu u\|_{L^2(\Gamma_0)}\right)\le r^{s/4}\|u\|_{H^2(\Omega_0)}+e^{\mathfrak{b}r^{-\gamma}}\mathfrak{C}_\lambda (u),\ u\in H^2(\Omega_0),
\end{equation}
where we have set $\Gamma_0:=\partial \Omega_0$ and
\[
\mathfrak{C}_\lambda (u) :=(1+\lambda)\left(\|u\|_{H^1(\Gamma_{\ast})}+\| \partial_\nu u\|_{L^2(\Gamma_{\ast})}\right)+\|(\Delta -q+\lambda)u\|_{L^2(\Omega_0)}.
\]
Thus, in light of \eqref{5.1} and the embedding $\Gamma \subset \Gamma_0$, we deduce from \eqref{es20} upon applying \eqref{UC1} with $(\lambda,u)=(\lambda_k,(u_k)_{| \Omega_0})$, $k \geq 1$, that for all $r \in (0,1)$, we have
\begin{eqnarray*}
& & C\|\psi_k-\tilde{\psi}_k\|_{L^2(\Gamma)} \\
& \le & r^{s/4}( \lambda_k +\tilde{\lambda}_k )+e^{\mathfrak{b} r^{-\gamma}}\left((1+\|\alpha\|_{C^{0,1}(\Gamma)})\lambda_k\|\psi_k-\tilde{\psi}_k\|_{H^1(\Gamma_{\ast})}+|\lambda_k-\tilde{\lambda}_k|\right),
\end{eqnarray*}
for some constant $C>0$ depending only on  $n$, $\Omega$, $\Omega_0$, $\Gamma_\ast$, $\aleph$ and $s$. 
From this and Weyl's asymptotic formula \eqref{waf}, it then follows for all $k \ge 1$ and all $r \in (0,1)$, that
\begin{equation}
\label{es21}
C\|\psi_k-\tilde{\psi}_k\|_{L^2(\Gamma)}^2\le r^{s/2}k^{4/n}+e^{2\mathfrak{b}r^{-\gamma}}\left(k^{4/n}\|\psi_k-\tilde{\psi}_k\|_{H^1(\Gamma_{\ast})}^2 + |\lambda_k-\tilde{\lambda}_k|^2 \right).
\end{equation}
Here and in the remaining part of this proof, $C$ denotes a generic positive constant depending only on $n$, $\Omega$, $\Omega_0$, $\Gamma_\ast$, $\aleph$ and $\alpha$, which may change from one line to another. Since the constant $C$ is independent of $k \geq 1$ and
since $\sum_{k\ge 1}k^{-2\mathfrak{t}+4/n}<\infty$ as we have $2\mathfrak{t}>1+4/n$,  we find upon multiplying both sides of 
\eqref{es21} by $k^{-2\mathfrak{t}}$ and then summing up the result over $k \geq 1$, that
\begin{eqnarray}
\label{es22}
& & C \|(k^{-\mathfrak{t}} (\psi_k-\tilde{\psi}_k))\|_{\ell ^2(L^2(\Gamma))}^2 \\
& \le & r^{s/2}
+e^{2\mathfrak{b}r^{-\gamma}} \left(  \| (k^{-\mathfrak{t}+2/n} (\psi_k-\tilde{\psi}_k))\|_{\ell^2(H^1(\Gamma_{\ast}))}^2 + \| (k^{-\mathfrak{t}} (\lambda_k-\tilde{\lambda}_k)) \|_{\ell^2}^2 \right), \nonumber
\end{eqnarray}
uniformly in $r \in (0,1)$. 
Further, taking into account that $(k^{\mathfrak{t}}(\psi_k-\tilde{\phi}_k)) \in \ell^2(L^2(\Gamma))$ and $\|(k^{\mathfrak{t}}(\psi_k-\tilde{\phi}_k))\|_{\ell ^2(L^2(\Gamma))}\le \aleph$, we have
\begin{eqnarray*}
\|(\psi_k-\tilde{\psi}_k)\|_{\ell ^2(L^2(\Gamma))}^2
&\le &\|(k^{\mathfrak{t}}(\psi_k-\tilde{\phi}_k))\|_{\ell ^2(L^2(\Gamma))}\|(k^{-\mathfrak{t}}(\psi_k-\tilde{\psi}_k))\|_{\ell ^2(L^2(\Gamma))} \\
& \le & \aleph\|(k^{-\mathfrak{t}}(\psi_k-\tilde{\psi}_k))\|_{\ell ^2(L^2(\Gamma))},
\end{eqnarray*}
by the Cauchy-Schwarz inequality, and hence
\begin{eqnarray}
\label{es23}
& & C \|(\psi_k-\tilde{\psi}_k)\|_{\ell ^2(L^2(\Gamma))}^2 \\
& \le & r^{s/4}
+e^{\mathfrak{b}r^{-\gamma}} \left(  \| (k^{-\mathfrak{t}} (\lambda_k-\tilde{\lambda}_k)) \|_{\ell^2} + \| (k^{-\mathfrak{t}+2/n} (\psi_k-\tilde{\psi}_k))\|_{\ell^2(H^1(\Gamma_{\ast}))} \right), \nonumber
\end{eqnarray}
whenever $r \in (0,1)$, by \eqref{es22}. 
Moreover, since 
$$\| (k^{-\mathfrak{t}} (\lambda_k-\tilde{\lambda}_k)) \|_{\ell^2} \le \left( \sum_{k\ge 1} k^{-2\mathfrak{t}}\right)^{1/2} 
\| (\lambda_k-\tilde{\lambda}_k)) \|_{\ell^\infty}$$ 
and $\sum_{k\ge 1} k^{-2\mathfrak{t}}<\infty$ as we assumed that $2\mathfrak{t}>1+n/2$, \eqref{es23} then provides
\begin{equation}
\label{es24}
\|(\psi_k-\tilde{\psi}_k)\|_{\ell ^2(L^2(\Gamma))}^2 \le C \left( r^{s/4}
+e^{\mathfrak{b}r^{-\gamma}} \delta_\ast \right),\ r \in (0,1),
\end{equation}
where we have set
$$ \delta_\ast := 
\| (\lambda_k-\tilde{\lambda}_k) \|_{\ell^\infty} + \| (k^{-\mathfrak{t}+2/n} (\psi_k-\tilde{\psi}_k))\|_{\ell^2(H^1(\Gamma_{\ast}))}. $$
Next, with reference to \eqref{def-delta} we have
\begin{eqnarray*}
\delta^2 & \le & 2 \left( \|(\lambda_k-\tilde{\lambda}_k)\|_{\ell^\infty}^2 + \|(\psi_k-\tilde{\psi}_k)\|_{\ell ^2(L^2(\Gamma))}^2 \right) \\
& \le & 2 \left( \aleph \|(\lambda_k-\tilde{\lambda}_k)\|_{\ell^\infty} + \|(\psi_k-\tilde{\psi}_k)\|_{\ell ^2(L^2(\Gamma))}^2 \right). \nonumber
\end{eqnarray*}
Moreover, since $\|(\lambda_k-\tilde{\lambda}_k)\|_{\ell^\infty} \le e^{\mathfrak{b}r^{-\gamma}} \delta_\ast$ whenever $r \in (0,1)$, the above inequality combined with \eqref{es24} yield that
\begin{equation}
\label{e1}
\delta^2 \leq C \left( r^{s/4}+e^{\mathfrak{b}r^{-\gamma}} \delta_\ast \right),\  r \in (0,1).
\end{equation}

On the other hand, we have 
$$
\|q-\tilde{q}\|_{H^{-1}(\Omega)}\le C \delta^{2 (1-2\beta) / (3(n+2))},
$$
from Theorem \ref{theorem2}. Putting this together with \eqref{e1}, we obtain that
\begin{equation}\label{e3}
\|q-\tilde{q}\|_{H^{-1}(\Omega)}\le C \left( r^{s/4}+e^{\mathfrak{b}r^{-\gamma}}\delta_\ast \right)^{(1-2\beta)/(3(n+2))},\ r \in (0,1).
\end{equation}
Let us now examine the two cases $\delta_\ast \in (0,1/e)$ and $\delta_\ast \in [1/e,\infty)$ separately. We start with $\delta_\ast \in (0,1/e)$ and take $r=| \ln \delta_\ast |^{-1/\gamma} \in (0,1)$ in \eqref{e3}, getting that
\begin{eqnarray*}
\|q-\tilde{q}\|_{H^{-1}(\Omega)} & \le & C \left( | \ln \delta_\ast |^{-s/(4\gamma)}+\delta_\ast^{(\mathfrak{b}+1)} \right)^{(1-2\beta)/(3(n+2))} \\
& \le & C \left( | \ln \delta_\ast |^{-s/(4\gamma)}+ e^{-(\mathfrak{b}+1)} | \ln \delta_\ast |^{-(\mathfrak{b}+1)} \right)^{(1-2\beta)/(3(n+2))},
\end{eqnarray*}
where we used in the last line that $\delta_\ast \le 1/(e | \ln \delta_\ast |)$. This immediately yields
\begin{equation}\label{e4} 
\|q-\tilde{q}\|_{H^{-1}(\Omega)} \le C | \ln \delta_\ast |^{-\vartheta},\ \delta_\ast \in (0,1/e),
\end{equation}
where
$$\vartheta:=\min \left( s/(4\gamma) , \mathfrak{b}+1 \right)(1-2\beta)/(3(n+2)).$$ 
Next, for $\delta_\ast \in [1/e,\infty)$, we get upon choosing, say, $r=1/2$ in \eqref{e3}, and then taking into account that 
$r <  1 \le e \delta_\ast$ and $(1-2\beta)/(3(n+2)) \ge 0$, that
\begin{eqnarray*}
\|q-\tilde{q}\|_{H^{-1}(\Omega)} & \le & C \left( (e\delta_\ast)^{s/4}+e^{2^\gamma \mathfrak{b}-1} e \delta_\ast \right)^{(1-2\beta)/(3(n+2))} \\
& \le & C (e\delta_\ast)^{(1-2\beta)/(3(n+2))} \\
& \le & C \delta_\ast. 
\end{eqnarray*}
Now, with reference to \eqref{def-Phi}, the stability estimate \eqref{thm3} follows readily from this and \eqref{e4}.

\appendix
\section{Proof of the continuity of $\mathfrak{a}_0$}\label{appendixA}
As $\Gamma$ is compact, $L^{s_2}(\Gamma)$ is continuously embedded in $L^{s_1}(\Gamma)$ whenever $s_1\le s_2$. Therefore, we may assume without loss of generality that $s\in (n-1,n)$. Set $h(r):=r/(r-2)$, $r \in (2,\infty)$. It is easy to see that the function $h$ is decreasing and bijective from $\left( 2n/(n-1),2(n-1)/(n-2) \right)$ onto $(n-1,n)$. 
As a consequence, there exists a unique $p\in \left( 2n/(n-1),2(n-1)/(n-2) \right)$ such that $h(p)=s$, since $s\in (n-1,n)$. Otherwise stated, the conjugate exponent of $p/2$ is $s$, i.e. $s=(p/2)^\ast$. Further, we recall from the comments following \cite[Theorem 1.4.1]{Gr} that the space $H^1(\Omega)=W^{1,2}(\Omega)$ is continuously embedded into $W^{t,p}(\Omega)$, where $t=1-n/2+n/p\in \left( (n-2)/\left( 2(n-1)\right),1/2 \right)$. And since $t-1/p>0$, the map $w\in W^{t,p}(\Omega)\mapsto u_{|\Gamma}\in L^p(\Gamma)$ is bounded according to \cite[Theorem 1.6.1.3]{Gr}. Therefore, $w\in V\mapsto w_{|\Gamma}\in L^p(\Gamma)$ is bounded as well.

Now, for all $u,v\in V$, we have
$$
\int_\Gamma | \alpha | | u | | v | ds(x)\le \|\alpha |u|^2\|_{L^1(\Omega)}^{1/2}\|\alpha |v|^2\|_{L^1(\Omega)}^{1/2},
$$
by applying the Cauchy-Schwarz inequality and then the H\"older inequality, from where we get that
\begin{equation}\label{A1}
\int_\Gamma | \alpha | | u | | v | ds(x)\le \|\alpha\|_{L^s(\Gamma)}\|u\|_{L^p(\Gamma)}\|v\|_{L^p(\Gamma)}.
\end{equation}
Thus, bearing in mind that $|\mathfrak{a}_0(u,v)| \le \int_\Gamma |\alpha| |u||v|ds(x)$, we find that
\[
|\mathfrak{a}_0(u,v)| \le c_0\|\alpha\|_{L^s(\Gamma)}\|u\|_V\|v\|_{V},
\]
where $c_0$ is a positive constant depending only on $\Omega$ and $s$.


\end{document}